\newcommand{\<}{\langle}
\renewcommand{\>}{\rangle}
\renewcommand{\a}{\alpha}
\renewcommand{\b}{\beta}
\renewcommand{\d}{\delta}
\newcommand{\D}{\Delta}
\newcommand{\g}{\gamma}
\newcommand{\s}{\sigma}
\renewcommand{\o}{\omega}
\renewcommand{\O}{\Omega}
\renewcommand{\S}{\Sigma}
\newcommand{\z}{\zeta}
\newcommand{\cA}{{\mathcal A}}
\newcommand{\cB}{{\mathcal B}}
\newcommand{\cD}{{\mathcal D}}
\newcommand{\cH}{{\mathcal H}}
\newcommand{\cI}{{\mathcal I}}
\newcommand{\cK}{{\mathcal K}}
\newcommand{\cL}{{\mathcal L}}
\newcommand{\cY}{{\mathcal Y}}
\newcommand{\tL}{{\tilde L}}
\newtheorem{thm}{Theorem}[section]
\newtheorem{cor}[thm]{Corollary} 
\newtheorem{pro}[thm]{Proposition} 
\theoremstyle{definition}   
\newtheorem{defn}[thm]{Definition} 
\newtheorem{exam}[thm]{Example}
\newcommand{\bZ}{{\mathbb Z}}
\newcommand{\bC}{{\mathbb C}}
\newcommand{\bE}{{\boldsymbol E}}
\begin{document}

\title
[Standard deviation is a strongly Leibniz seminorm]
{Standard deviation is a \\  strongly Leibniz seminorm  
}
\author{Marc A. Rieffel}
\address{Department of Mathematics \\
University of California \\ Berkeley, CA 94720-3840}
\email{rieffel@math.berkeley.edu}
\thanks{The research reported here was
supported in part by National Science Foundation grant DMS-1066368.}

\subjclass[2010]{Primary 46L53; Secondary 60B99}
\keywords{standard deviation, Leibniz seminorm, C*-algebra, matricial
seminorm, conditional expectation}

\begin{abstract}
We show that standard deviation $\s$ satisfies the Leibniz inequality
$\s(fg) \leq  \s(f)\|g\|  +  \|f\|\s(g)$ for bounded functions $f, g$
on a probability space, where the norm is the supremum norm.
A related inequality that we refer to as ``strong" is also shown 
to hold. We show that these in fact hold also for non-commutative
probability spaces. We extend this to the case of matricial seminorms
on a unital C*-algebra, which leads us to treat also the case of
a conditional expectation from a unital C*-algebra onto a
unital C*-subalgebra.
\end{abstract}

\maketitle
\allowdisplaybreaks

\section*{Introduction}

A seminorm $L$ on a unital normed algebra $\cA$ is said to be
\emph{Leibniz} if $L(1_\cA) = 0$ and
\[
L(AB) \ \leq \ L(A)\|B\| \ + \ \|A\|L(B)
\]
for all $A, B \in \cA$. It is said to be \emph{strongly Leibniz}
if further, whenever $A$ is invertible in $\cA$ then
\[
L(A^{-1}) \ \leq \ \|A^{-1}\|^2L(A).
\]
The latter condition has received almost no attention in the
literature, but it plays a crucial role in \cite{R17}, where I
relate vector bundles over compact metric spaces to
Gromov-Hausdorff distance. See for example the proofs
of propositions 2.3, 3.1, and 3.4 of \cite{R17}.

The prototype
for strongly Leibniz seminorms comes from metric spaces. For
simplicity of exposition we restrict attention here to compact
metric spaces. So let $(X, d)$ be a compact metric space, and let
$C(X)$ be the algebra of continuous complex-valued functions on X, equipped
with the supremum norm $\|\cdot\|_\infty$. For each $f \in C(X)$ let
$L(f)$ be its Lipschitz constant, defined by
\[
(0.1)
 \quad L(f) \ = \ \sup\{|f(x) - f(y)|/d(x,y) : x, y \in X \ \  \mathrm{and} \ \  x \neq y \}   . 
\]
It can easily happen that $L(f) = +\infty$, but the set, $\cA$, of functions
$f$ such that $L(f) < \infty$ forms a dense unital $*$-subalgebra of $C(X)$.
Thus $\cA$ is a unital normed algebra, and $L$ gives a (finite-valued)
seminorm on it that is easily seen to be strongly Leibniz. Furthermore,
it is not hard to show \cite{R5} that the metric $d$ can be recovered from
$L$. Thus, having $L$ is equivalent to having $d$.

My interest in this comes from the fact that this formulation suggests how
to define ``non-commutative metric spaces". Given a non-commutative
normed algebra $\cA$, one can \emph{define} a ``non-commutative metric"
on it to be a strongly Leibniz seminorm on $\cA$. There are then important
and interesting analytic considerations \cite{R5}, but we can ignore them for the
purposes of the present paper.

For my study of non-commutative metric spaces I have felt a need for more
examples and counter-examples that can clarify the variety of phenomena
that can occur. While calculating with a simple class of examples 
(discussed in Section \ref{simple}) I unexpectedly found that I
was looking at some standard deviations. I pursued this aspect, and this
paper records what I found.

To begin with, in Section \ref{deviat} we will see that if $(X, \mu)$ is an 
ordinary probabity measure space and if $A = \cL^\infty(X, \mu)$ is the
normed algebra of (equivalence classes) of bounded measurable
functions on $X$, and if $\s$ denotes the usual standard deviation of 
functions, defined by
\[
\s(f) \ = \ \|f - \mu(f)\|_2 \ =  \ \big(\int |f(x) \ - \ (\int f \ d\mu)|^2 \ d\mu(x)\big)^{1/2}  ,
\]
then $\s$ is a strongly Leibniz seminorm on $\cA$. I would be surprised 
if this fact does not appear somewhere in the vast literature on
probability theory, but so far I have not been able to find it. However,
we will also show that this fact is true for standard deviation
defined for non-commutative probability spaces, such as
matrix algebras equipped with a specified state, and for
corresponding infinite-dimensional algebras (C*-algebras)
equipped with a specified state.

In \cite{R17} essential use is made of ``matricial seminorms"
that are strongly Leibniz. By a matricial seminorm on a 
C*-algebra $\cA$ we mean a family $\{L_n\}$ where $L_n$
is a seminorm on the matrix algebra $M_n(\cA)$ over
$\cA$ for each natural number $n$, and the family
is coherent is a natural way. I very much want to extend the
results of \cite{R17} to the non-commutative setting so that I can
use them to relate ``vector bundles" (i.e. projective modules) over
non-commutative algebras such as those studied in \cite{R7, R21}
that are close for quantum Gromov-Hausdorff distance.
For this reason, in Section \ref{matricial} we begin exploring standard
deviation in this matricial setting. In doing this we find that we need
to understand a generalization of standard deviation to the setting
of conditional expectations from a C*-algebra $\cA$ onto a
sub-C*-algebra $\cB$. That is the subject of Section \ref{condexp}.
It leads to the first examples that I know of for Leibniz seminorms
that are not strongly Leibniz. That is the subject of Section \ref{nonstr}.

We will state many of our results for general unital C*-algebras. But
most of our results are already fully interesting for finite-dimensional
C*-algebras, that is, unital $*$-subalgebras of matrix algebras,
equipped with the operator norm. Thus, readers who are not so familiar 
with general C*-algebras will lose little if in reading this paper
they assume that all of the algebras, and Hilbert spaces, are
finite-dimensional.

Very recently I have noticed connections between the topic of this
paper and the topic of resistance networks. I plan to explore this
connection further and to report on what I find.


\section{Sources of strongly Leibniz seminorms}
\label{sources}

Up to now the only source that I know of for strongly Leibniz 
seminorms consists of ``normed first-order differential calculi".
For a unital algebra $\cA$, a first-order differential calculus
is \cite{GVF} a bimodule $\O$ over $\cA$ together with a derivation
$\d$ from $\cA$ to $\O$, where the derivation (or Leibniz) property is
\[
\d(AB) \ = \ \d(A)B \ + \ A\d(B)
\]
for all $A, B \in \cA$. When $\cA$ is a normed algebra, we can ask
that $\O$ also be a normed bimodule, so that
\[
\|A\o B\|_\O \ \leq \ \|A\| \|\o\|_\O \|B\|
\]
for all $\o$ in $\O$ and all $A, B \in \cA$.
In this case if we set
\[
L(A) \ = \ \|\d(A)\|_\O
\]
for all $A \in \cA$, we see immediately that $L$ is a Leibniz seminorm
on $\cA$. But if $A$ is invertible in $\cA$, then the derivation property
of $\d$ implies that
\[
\d(A^{-1}) \ = \ -A^{-1} \d(A) A^{-1}  .
\]
From this it follow that $L$ is strongly Leibniz. For later use we record
this as:
\begin{pro}
\label{deriv}
Let $\cA$ be a unital normed algebra and let $(\O, \d)$ be a normed first-order
differential calculus over $\cA$. Set $L(A) \ = \ \|\d(A)\|_\O$ for all $A \in \cA$.
Then $L$ is a strongly Leibniz seminorm on $\cA$.
\end{pro}

Many of the first-order differential calculi that occur in practice are ``inner", 
meaning that there is a distinguished element, $\o_0$, in $\O$ such that
$\d$ is defined by
\[
\d(A) \ = \ \o_0 A \ - \ A \o_0   .
\]
Among the normed first-order differential calculi, the ones with the richest
structure are the ``spectral triples'' that were introduced by Alain Connes
\cite{Cn2, Cn7, CMr} in order to define ``non-commutative Riemannian
manifolds" and related structures. In this case $\cA$ should be a
$*$-algebra. Then a spectral triple for $\cA$ consists of a Hilbert space
$\cH$, a $*$-representation $\pi$ (or ``action'' on $\cH$ if the notation 
does not include $\pi$)  of $\cA$ into the algebra $\cL(\cH)$ of
bounded operators on $\cH$, and a self-adjoint operator $D$ on $\cH$
that is often referred to as the ``Dirac" operator for the spectral triple.
Usually $D$ is an unbounded operator, and the requirement is that for
each $A \in \cA$ the commutator $[D, \pi(A)]$ should be a bounded 
operator. (There are further analytical requirements, but we will not
need them here.) By means of $\pi$ we can view $\cL(\cH)$ as a bimodule
$\O$ over $\cA$. Then if we set
\[
\d(A) \ = \ [D, \pi(A)] \ = \ D\pi(A) - \pi(A)D
\]
for all $A \in \cA$, we obtain a derivation from $\cA$ into $\O$. It is
natural to equip $\cL(\cH)$ with its operator norm. If $\cA$ is equipped
with a $*$-norm such that $\pi$ does not increase norms, then $(\O, \d)$
is clearly a normed first-order differential calculus, and we obtain a
strongly Leibniz $*$-seminorm $L$ on $\cA$ by setting
\[
L(A) \ = \ \|[D, \pi(A)]\|   .
\]
We see that $(\O, \d)$ is almost inner, with $D$ serving as the distinguished
element $\o_0$,  the only obstacle being that
$D$ may be unbounded, and so not in $\O$. 

Part of the richness of spectral triples is that they readily provide matricial
seminorms, in contrast to more general normed first-order differential
calculi. This will be fundamental to our discussion in Section \ref{matricial}.

More information about the above sources of strongly Leibniz seminorms
can be found in section 2 of \cite{R21}. One can make some trivial
modifications of the structures described above, but it would be interesting
to have other sources of strongly Leibniz seminorms that are genuinely different.


\section{A class of simple examples}
\label{simple}

In section 7 of \cite{R5} I considered the following very simple spectral
triple. Let $X  =  \{1,2,3\}$, let $\cK  =  \ell^2(X)$, and let $\cB  =  \ell^\infty(X)$
with its evident action on $\cK$ by pointwise multiplication. Let $D$ be the
``Dirac" operator on $\cK$ whose matrix for the standard basis of $\cK$
is
\[
\begin{pmatrix}  0 & 0 & \a_1 \\
                          0 & 0 & \a_2  \\
                         -\a_1 & -\a_2 & 0   
\end{pmatrix}   .
\]
(For ease of bookkeeping we prefer to take our ``Dirac" operators, here and
later, to be skew-adjoint so that the corresponding derivation preserves
$*$. This does not change the corresponding seminorm.)
Then it is easily calculated that for $f \in \cB$, with $f = (f_1, f_2, f_3)$, we have
\[
L(f) \ = \ \big((f_1 - f_3)^2|\a_1|^2 \ + \ (f_2 - f_3)^2|\a_2|^2\big)^{1/2}   .
\]
This is quite different from the usual Leibniz seminorm as defined in
equation (0.1) -- it looks more like a Hilbert-space norm. This example
was shown in \cite{R5} to have some interesting properties. 

This example can be naturally generalized to the case in which
$X \ = \ \{1,\cdots, n\}$ and we have a vector of constants
$\xi \ = \ (\a_1, \cdots , \a_{n-1}, 0)$. To avoid trivial complications we will
assume that $\a_j \neq 0$ for all $j$. For ease of bookkeeping we will
also assume that $\|\xi\|_2 = 1$. It is clear that the last element, $n$,
of $X$ is playing a special role. Accordingly, we set 
$Y = \{1, \cdots , n-1\}$, and we set $\cA = \ell^\infty(Y)$, so that
$\cB = \cA \oplus \bC$. Let $e_n$ denote the last standard basis
vector for $\cK$. Thus $\xi$ and $e_n$ are orthogonal unit
vectors in $\cK$. Then it is easily seen that the evident generalization
of the above Dirac operator $D$ can be expressed as:
\[
D \ = \ \langle \xi, e_n\rangle_c \ - \ \langle e_n, \xi \rangle_c   , 
\]
where for any $\xi, \eta \in \cK$ the symbol $\langle \xi, \eta\rangle_c$
denotes the rank-one operator on $\cK$ defined by
\[
\langle \xi, \eta\rangle_c(\z) \ = \ \xi \langle \eta, \z \rangle_\cK
\]
for all $\z \in \cK$. (We take the inner product on $\cK$ to be linear in
the second variable, and so $\langle \cdot , \cdot \rangle_c$ is linear in the
first variable.)

Our specific unit vector $\xi$ determines a state, $\mu$, on $\cA$
by $\mu(A) = \langle \xi, A\xi \rangle$, faithful because of our assumption
that $\a_j \neq 0$ for all $j$. Then we see that we can generalize to the
situation in which $\cA$ is a non-commutative unital C*-algebra and
$\mu$ is a faithful state on $\cA$ (i.e. a positive linear functional on $\cA$
such that $\mu(1_\cA) = 1$, and $\mu(A^*A) = 0$ implies
$A = 0$). Let $\cH = \cL^2(\cA, \mu)$ be the corresponding GNS
Hilbert space \cite{KR1, Blk2, Str} obtained by completing $\cA$
for the inner product $\langle A, B\rangle_\mu = \mu(A^*B)$, with
its left action of $\cA$ on $\cH$ and its cyclic vector $\xi = 1_\cA$.

Let $\cB = \cA \oplus \bC$ as C*-algebra, and let $\cK = \cH\oplus\bC$
with the evident inner product. We use the evident action of $\cB$
on the Hilbert space $\cK$. Let $\eta$ be $1 \in \bC \subset \cK$, so
that $\xi$ and $\eta$ are orthogonal unit vectors in $\cK$. We then
define a Dirac operator on $\cK$, in generalization of our earlier $D$, by
\[
D \ = \ \langle \xi, \eta\rangle_c \ - \ \langle \eta, \xi \rangle_c   .
\]

We now find a convenient formula for the corresponding strongly
Leibniz seminorm. We write out the calculation in full so as to make
clear our conventions. For $(A, \a) \in \cB$ we have
\begin{eqnarray*}
[D, (A,\a)] &=& (\langle \xi, \eta\rangle_c - \langle \eta, \xi\rangle_c)(A, \a)
 \ - (A,\a)(\langle \xi, \eta\rangle_c - \langle \eta, \xi\rangle_c)  \\
&=& \ \langle \xi, \bar\a\eta\rangle_c - \langle \eta, A^*\xi\rangle_c
- \langle A\xi, \eta\rangle_c + \langle \a\eta, \xi\rangle_c  \\
&=& \ -\langle (A-\a 1_\cA)\xi, \eta\rangle_c - 
\langle \eta, (A^*-\bar\a 1_\cA)\xi\rangle_c   .
\end{eqnarray*}
(From now on we will often write just $\a$ instead of $\a 1_\cA$.)
Because $\eta$ is orthogonal to $B\xi$ for all $B \in \cA$, we see that
the two main terms above have orthogonal ranges, as do their adjoints,
and so
\[
\|[D, (A, \a)\| \ = \ \|(A - \a)\xi\| \vee \|(A^* - \bar \a)\xi\|   ,
\]
where $\vee$ denotes the maximum of the quantities.
But $\xi$ determines the state $\mu$, and so for any $C \in \cA$ we have
$\|C\xi\| = (\mu(C^*C))^{1/2}$. But this is just the norm of $C$ in
$\cL^2(\cA, \mu) = \cH$, which we will denote by $\|C\|_\mu$.
We have thus obtained:

\begin{thm}
\label{thmd}
 Let $\cA$ be a unital C*-algebra, let $\mu$ be a faithful state
 on $\cA$, and let $\cH = \cL^2(\cA, \mu)$, with its action of
 $\cA$ and its cyclic vector $\xi$. Let $\cK$ be the Hilbert
 space $\cH \oplus \bC$, and let 
 $\cB$ be the C*-algebra
 $\cA \oplus \bC$, with its evident representation on $\cK$.
 Let $\eta = 1 \in \bC \subset \cK$. Define a Dirac operator
 on $\cK$ by
 \[
D \ = \ \langle \xi, \eta\rangle_c \ - \ \langle \eta, \xi \rangle_c   
\]
as above. Then for any $(A, \a) \in \cB$ we have
 \[
 L((\cA, \a)) \ = \  \|[D, (A, \a)]\| \ 
 = \ \|(A - \a)\|_\mu \vee \|(A^* - \bar \a\|_\mu   .
 \]
 \end{thm}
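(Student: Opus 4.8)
The plan is to use the definition of the seminorm from the spectral-triple framework of Section \ref{sources}, namely $L((A,\a)) = \|[D,(A,\a)]\|$, and simply to evaluate this operator norm explicitly. So the entire proof reduces to three moves: expand the commutator $[D,(A,\a)]$ into a manageable closed form, compute the norm of that form by exploiting orthogonality in $\cK = \cH\oplus\bC$, and then translate the resulting $\cH$-norms back into the GNS seminorm $\|\cdot\|_\mu$. The first equality in the statement is just the definition of $L$, so there is really only a norm to compute.

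First I would record how $(A,\a)$ interacts with the two rank-one operators building $D$. Since $\xi$ lives in the $\cH$-summand and $\eta$ in the $\bC$-summand of $\cK$, and since $(A,\a)$ acts as left multiplication by $A$ on $\cH$ and as the scalar $\a$ on $\bC$, a direct computation from the defining formula $\langle u,v\rangle_c(\z) = u\langle v,\z\rangle$ gives $(A,\a)\langle\xi,\eta\rangle_c = \langle A\xi,\eta\rangle_c$ and $\langle\xi,\eta\rangle_c(A,\a) = \langle\xi,\bar\a\eta\rangle_c$, together with the analogous pair for $\langle\eta,\xi\rangle_c$. The only thing to watch is the conjugate-linearity of $\langle\cdot,\cdot\rangle_c$ in its second slot, which is what produces the conjugate $\bar\a$. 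Subtracting $(A,\a)D$ from $D(A,\a)$ and collecting terms (using linearity in the first slot to absorb the scalars) should yield the two-term expression $[D,(A,\a)] = -\langle (A-\a)\xi,\eta\rangle_c - \langle\eta,(A^*-\bar\a)\xi\rangle_c$.

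The step I expect to be the real content is the norm computation. Writing $T_1$ and $T_2$ for the two summands, $T_1$ sends the line $\bC\eta$ into $\cH$ while $T_2$ sends a line in $\cH$ into $\bC$. The crucial fact is that $\eta$ is orthogonal to $\cA\xi$, hence to all of $\cH$; this makes the ranges of $T_1$ and $T_2$ orthogonal, and likewise their initial spaces orthogonal, so that for $\z = \z_\cH + \z_\bC$ one has $T\z = T_1\z_\bC + T_2\z_\cH$ with the two pieces landing in orthogonal summands. A Pythagorean estimate then gives $\|T\z\|^2 = \|T_1\z_\bC\|^2 + \|T_2\z_\cH\|^2$, from which $\|T\| = \|T_1\|\vee\|T_2\|$ follows, the maximum being attained by testing on each summand separately. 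Since a rank-one operator $\langle u,v\rangle_c$ has norm $\|u\|\,\|v\|$ and $\|\eta\| = 1$, this yields $\|[D,(A,\a)]\| = \|(A-\a)\xi\| \vee \|(A^*-\bar\a)\xi\|$.

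Finally I would identify these $\cH$-norms with the GNS seminorm. Because $\xi = 1_\cA$ is the cyclic vector implementing $\mu$, for any $C\in\cA$ the vector $C\xi$ is just the image of $C$ in $\cH = \cL^2(\cA,\mu)$, so $\|C\xi\| = \mu(C^*C)^{1/2} = \|C\|_\mu$. Taking $C = A-\a$ and $C = A^*-\bar\a$ completes the identification and gives the stated formula. The two places where care is needed are the conjugate-linearity bookkeeping in the commutator expansion and the verification that the orthogonality of $\eta$ to $\cH$ is genuinely what licenses replacing the norm of the sum by the maximum; everything else is routine.
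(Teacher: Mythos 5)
Your proposal is correct and follows essentially the same route as the paper: expand the commutator into the two rank-one terms $-\langle (A-\a)\xi,\eta\rangle_c - \langle\eta,(A^*-\bar\a)\xi\rangle_c$, use the orthogonality of $\eta$ to $\cH$ (so the two terms have orthogonal ranges and orthogonal initial spaces) to replace the norm of the sum by the maximum, and then identify $\|C\xi\|$ with $\|C\|_\mu$ via the GNS construction. The only cosmetic difference is that you make the Pythagorean argument and the rank-one norm formula explicit where the paper states them more briefly.
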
          
 
 Of course $L$ is
 a $*$-seminorm which is strongly Leibniz. 


\section{Standard deviation}
\label{deviat}

There seems to exist almost no literature concerning quotients
of Leibniz seminorms, but such literature as does exist \cite{BlC, R21}
recognizes that quotients of Leibniz seminorms may well not be
Leibniz. But no specific examples of this seem to be given in the
literature, and I do not know of a specific example, though I
imagine that such examples would not be very hard to find.

For the class of examples discussed in the previous section
there is an evident quotient seminorm to consider, coming from
the quotient of $\cB$ by its ideal $\bC$. This quotient algebra
can clearly be identified with $\cA$. For $L$ as in Theorem
\ref{thmd}  let us denote its quotient by $\tL$, so that
\[
\tL(A) \ = \ \inf\{L((A, \a)): \a \in \bC\}
\]
for all $A \in \cA$. From the expression for $L$ given in
Theorem \ref{thmd} we see that
\[
\tL(A) \ = \ \inf\{ \|(A - \a)\|_\mu \vee \|(A^* - \bar \a\|_\mu: \a \in \bC\}  .
\]
But $\|\cdot\|_\mu$ is the Hilbert space norm on $\cH$, and
$ \|(A - \a)\|_\mu$ is the distance from $A$ to an element in
the one-dimensional subspace $\bC = \bC 1_\cA$ of $\cH$. The
closest element to $A$ in this subspace is just the projection of $A$
into this subspace, which is $\mu(A)$. Furthermore, 
$\mu(A^*) = \overline{\mu(A)}$, and so by taking $\a = \mu(A)$
we obtain:
\begin{pro}
\label{proinf}
The quotient, $\tL$, of $L$ on $\cA$ is given by
\[
\tL(A) \ = \  \|A - \mu(A)\|_\mu \vee \|A^* - \mu(A^*)\|_\mu   .
\]
If $A^* = A$ then $\tL(A) \ = \  \|A - \mu(A)\|_\mu$.
\end{pro}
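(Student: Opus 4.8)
The plan is to start from the formula for $\tL$ obtained by combining the definition of the quotient seminorm with the expression for $L$ in Theorem \ref{thmd}, namely
\[
\tL(A) \ = \ \inf\{\|A - \a\|_\mu \vee \|A^* - \bar\a\|_\mu : \a \in \bC\},
\]
and to show that the infimum is attained at $\a = \mu(A)$. First I would recall that $\|\cdot\|_\mu$ is the Hilbert-space norm on $\cH = \cL^2(\cA, \mu)$ and that $\xi = 1_\cA$ is a unit vector in $\cH$, since $\|1_\cA\|_\mu^2 = \mu(1_\cA) = 1$. Under the identification $\a \mapsto \a 1_\cA$, minimizing $\|A - \a\|_\mu$ over $\a \in \bC$ is exactly the problem of finding the nearest point to $A$ in the one-dimensional subspace $\bC 1_\cA$ of $\cH$. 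Since this subspace is closed, that nearest point exists and is the orthogonal projection of $A$ onto $\bC 1_\cA$; because $1_\cA$ is a unit vector this projection equals $\langle 1_\cA, A\rangle_\mu\, 1_\cA = \mu(A) 1_\cA$. Hence the first term in the maximum is minimized precisely at $\a = \mu(A)$.

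The crucial step will be to check that the very same value $\a = \mu(A)$ also minimizes the second term $\|A^* - \bar\a\|_\mu$. By the same projection argument, that term is smallest when $\bar\a = \mu(A^*)$; and since $\mu$ is a state we have $\mu(A^*) = \overline{\mu(A)}$, so the condition $\bar\a = \mu(A^*)$ is nothing but $\a = \mu(A)$. Thus both entries of the maximum are individually minimized by this single choice of $\a$. Because one value of the parameter simultaneously minimizes both terms, it minimizes their pointwise maximum as well, so the infimum is attained there and we obtain
\[
\tL(A) \ = \ \|A - \mu(A)\|_\mu \vee \|A^* - \mu(A^*)\|_\mu.
\]

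The main point to watch is precisely this simultaneous minimization. In general the infimum of a maximum of two terms over a single parameter need not be attained at the minimizer of either term: the balance of a maximum is often struck where the two terms are equal, not where one is smallest. What makes the argument clean here is the coincidence, forced by the state property $\mu(A^*) = \overline{\mu(A)}$ together with the fact that $\xi = 1_\cA$ is the cyclic vector defining $\mu$, that $\a = \mu(A)$ minimizes both terms at once. Finally, the self-adjoint case is immediate: if $A^* = A$ then the two entries of the maximum coincide, giving $\tL(A) = \|A - \mu(A)\|_\mu$.
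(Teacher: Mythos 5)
Your proof is correct and follows essentially the same route as the paper: identify the minimizer of $\|A-\a\|_\mu$ as the orthogonal projection $\mu(A)$ onto $\bC 1_\cA$, observe that $\mu(A^*)=\overline{\mu(A)}$ forces the same $\a$ to minimize the second term, and conclude. Your explicit remark that a common minimizer of both terms necessarily minimizes their maximum is a worthwhile clarification of a point the paper leaves implicit, but it is not a different argument.
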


But for $A^* = A$ the term $\|A - \mu(A)\|_\mu$ is exactly the standard
deviation of $A$ for the state $\mu$, as used in quantum mechanics,
for example on page 56 of \cite{Sdb}. When one expands the inner 
product used to define this term, one quickly obtains, by a well-known
calculation,
\setcounter{equation}{1}
\begin{equation}
\label{eqstd}
\|A - \mu(A)\|_\mu \ = \ (\mu(A^*A) - |\mu(A)|^2)^{1/2}   ,
\end{equation}
which is frequently more useful for calculations of the standard
deviation. I have not seen the standard deviation defined for
non-self-adjoint operators, but in view of all of the above, 
it seems reasonable to
define it as follows:
\setcounter{thm}{2}

\begin{defn}
\label{defstd}
Let $\cA$ be a unital C*-algebra and let $\mu$ be a state
on $\cA$.  We define the \emph{standard deviation} with
respect to $\mu$, denoted by $\s^\mu$, by
\[
\s^\mu(A) =  \|A - \mu(A)\|_\mu \vee \|A^* - \mu(A^*)\|_\mu  .
\]
for all $A\in \cA$.
\end{defn}

As is natural here, we have not required $\mu$ to be faithful.
For simplicity of exposition we will nevertheless continue to
require that $\mu$ be faithful as we proceed. But with some
further arguing this requirement can be dropped.

When I noticed this connection with standard deviation, I said to myself
that surely the standard deviation fails the Leibniz inequality, thus
giving an example of a Leibniz seminorm $L$ that has a quotient
seminorm $\tL$ that is not Leibniz. This expectation was reinforced when 
I asked several probabilists if they had ever heard of the standard
deviation satisfying the Leibniz inequality, and they replied that they 
had not. But when I tried to find specific functions for which the
Leibniz inequality failed, I failed. Eventually I found the following
simple but not obvious proof that the Leibniz inequality does hold.
The proof depends on using the original form of the definition of standard
deviation rather than the often more convenient form given 
in equation \ref{eqstd}.
Define a seminorm, $L_0$, on $\cA$ by
\[
L_o(A) = \|A - \mu(A)\|_\mu  .
\]
We begin with:

\begin{pro}
\label{proleib}
Let notation be as above. Then $L_0$
is a Leibniz seminorm on $\cA$.
\end{pro}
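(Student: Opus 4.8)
The plan is to realize $L_0$ directly as the norm of a vector in the GNS Hilbert space $\cH = \cL^2(\cA,\mu)$ and to exploit the orthogonal projection onto the line spanned by the cyclic vector, deliberately avoiding the expanded formula \eqref{eqstd}. I would identify $\cA$ with its image in $\cH$ via $A \mapsto A\xi$, where $\xi = 1_\cA$, so that $\|A\|_\mu = \|A\xi\|_\mu$ and $\mu(A) = \langle \xi, A\xi\rangle_\mu$. Let $P$ denote the orthogonal projection of $\cH$ onto the one-dimensional subspace $\bC\xi$. Since $\langle\xi,\xi\rangle_\mu = \mu(1_\cA) = 1$, we have $P(A\xi) = \langle\xi, A\xi\rangle_\mu\,\xi = \mu(A)\xi$, whence $(A - \mu(A))\xi = (I-P)A\xi$ and
\[
L_0(A) \ = \ \|(I-P)A\xi\|_\mu .
\]
That $L_0$ is a seminorm with $L_0(1_\cA) = 0$ is then immediate: it is the composition of the linear map $A \mapsto (I-P)A\xi$ with the Hilbert-space norm, and $(I-P)\xi = 0$.

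For the Leibniz inequality the key step I would use is the decomposition
\[
(AB)\xi \ = \ \mu(B)\,A\xi \ + \ A\,(I-P)B\xi ,
\]
verified by expanding $A(I-P)B\xi = AB\xi - \mu(B)A\xi$. Applying $I-P$ and using the triangle inequality would give
\[
L_0(AB) \ \leq \ |\mu(B)|\,\|(I-P)A\xi\|_\mu \ + \ \|(I-P)A\,(I-P)B\xi\|_\mu .
\]
Two elementary bounds then finish the argument: first, $|\mu(B)| \leq \|B\|$ because $\mu$ is a state; second, since $\cA$ acts on $\cH$ by a contractive representation (isometric, in fact, as $\mu$ is faithful) and $\|I-P\| \leq 1$, one has $\|(I-P)A\,(I-P)B\xi\|_\mu \leq \|A\|\,\|(I-P)B\xi\|_\mu$. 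Substituting $\|(I-P)A\xi\|_\mu = L_0(A)$ and $\|(I-P)B\xi\|_\mu = L_0(B)$ would yield
\[
L_0(AB) \ \leq \ L_0(A)\|B\| \ + \ \|A\|L_0(B) ,
\]
as required.

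The point needing care — and the reason the ``convenient'' form \eqref{eqstd} is set aside — is the asymmetry of the decomposition: the factor $B$ is split relative to the cyclic vector while $A$ is left to act as an operator on the already-centered vector $(I-P)B\xi$. The whole estimate hinges on $A$ acting with operator norm at most $\|A\|$ and on the scalar $\mu(B)$ being controlled by $\|B\|$; no comparable splitting is visible from the expression $(\mu(A^*A)-|\mu(A)|^2)^{1/2}$, so working inside $\cH$ with the projection $P$ is what makes the proof go through. I expect this choice of representation, rather than any computation, to be the crux.
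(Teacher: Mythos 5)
Your proof is correct and is essentially the paper's own argument in a different dress: your identity $(I-P)(AB)\xi=\mu(B)(I-P)A\xi+(I-P)A(I-P)B\xi$ is exactly the paper's splitting $AB-\mu(A)\mu(B)=\mu(B)(A-\mu(A))+A(B-\mu(B))$, with your use of the contraction $I-P$ playing the role of the paper's observation that $\mu(AB)$ is the closest point of $\bC$ to $AB$ in $\cH$. Both arguments then close with the same two bounds, $|\mu(B)|\le\|B\|$ and the contractivity of the left action of $A$ on $\cH$.
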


\begin{proof}
Let $A, B \in \cA$. Since $\mu(AB)$ is the closest point in $\bC$ to
$AB$ for the Hilbert-space norm of $\cH$, we will have
\begin{eqnarray*}
L_0(AB) &=& \|AB - \mu(AB)\|_\mu \ \leq \ \|AB-\mu(A)\mu(B)\|_\mu   \\
&\leq& \|AB -A\mu(B)\|_\mu \ + \ \|A\mu(B) - \mu(A)\mu(B)\|_\mu   \\
&\leq& \|A\|_\cA \|B-\mu(B)\|_\mu \ + \ \|A-\mu(A)\|_\mu|\mu(B)|,
\end{eqnarray*}
and since $|\mu(B)| \leq \|B\|_\cA$, we obtain the Leibniz inequality.
\end{proof}

Note that $L_0$ need not be a $*$-seminorm. Because the
maximum of two Leibniz seminorms is again a Leibniz seminorm
according to proposition 1.2iii of \cite{R21}, we obtain from the
the definition of $\s^\mu$ given in Definition \ref{defstd} and from the
above proposition:

\begin{thm}
\label{thmleib}
Let notation be as above. The standard deviation seminorm, $\s^\mu$, is
a Leibniz $*$-seminorm.
\end{thm}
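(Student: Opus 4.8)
The plan is to deduce Theorem \ref{thmleib} directly from the two facts the author has already assembled, namely Proposition \ref{proleib} (that $L_0$ is Leibniz) and the quoted fact from \cite{R21} that the pointwise maximum of two Leibniz seminorms is again Leibniz. The key observation is that $\s^\mu$ is precisely such a maximum. Writing $L_0^*(A) = \|A^* - \mu(A^*)\|_\mu$, Definition \ref{defstd} says exactly that $\s^\mu(A) = L_0(A) \vee L_0^*(A)$, so it suffices to show that $L_0^*$ is also Leibniz and then invoke the stability of the Leibniz property under $\vee$.

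First I would verify that $L_0^*$ is a seminorm: it is the composition of $L_0$ with the involution $A \mapsto A^*$, and since $\mu(A^*) = \overline{\mu(A)}$ one checks directly that $L_0^*(A) = L_0(A^*)$. The involution is a conjugate-linear isometry of $\cA$ (and of $\cH$, since $\mu(A^*A) = \mu((A^*)(A^*)^*)$ need not hold in general, so here one uses instead that $\|A^*\|_\mu = \|A^*\|_\mu$ is the relevant quantity appearing in $L_0^*$ by definition, not a transported norm). To avoid this subtlety I would argue more cleanly: apply Proposition \ref{proleib} to the products $B^*A^*$ rather than $AB$. Concretely, for $A, B \in \cA$,
\[
L_0^*(AB) = L_0\big((AB)^*\big) = L_0(B^*A^*) \leq \|B^*\|_\cA\, L_0(A^*) + L_0(B^*)\,\|A^*\|_\cA,
\]
and since $\|B^*\|_\cA = \|B\|_\cA$, $\|A^*\|_\cA = \|A\|_\cA$, $L_0(A^*) = L_0^*(A)$, and $L_0(B^*) = L_0^*(B)$, this rearranges to the Leibniz inequality $L_0^*(AB) \leq \|A\|_\cA\, L_0^*(B) + L_0^*(A)\,\|B\|_\cA$. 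Thus $L_0^*$ is Leibniz. One also notes $L_0(1_\cA) = L_0^*(1_\cA) = 0$, so the normalization condition holds for both.

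With both $L_0$ and $L_0^*$ Leibniz, Proposition 1.2iii of \cite{R21} gives immediately that $\s^\mu = L_0 \vee L_0^*$ is Leibniz. Finally, to see that $\s^\mu$ is a $*$-seminorm, observe that $\s^\mu(A^*) = L_0(A^*) \vee L_0^*(A^*) = L_0^*(A) \vee L_0(A) = \s^\mu(A)$, so the two terms simply swap under the involution and the maximum is unchanged. I do not anticipate a genuine obstacle here: the entire content has been front-loaded into Proposition \ref{proleib} and into the maximum-stability lemma cited from \cite{R21}. The only point requiring a moment's care is the bookkeeping with the involution — making sure the Leibniz inequality for $L_0^*$ is obtained by feeding the reversed product $B^*A^*$ into the already-proved inequality for $L_0$, and tracking that the supremum norm and the roles of $A, B$ interchange correctly so that the result emerges in the standard (non-reversed) Leibniz form.
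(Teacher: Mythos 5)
Your proposal is correct and follows essentially the same route as the paper, which simply combines Proposition \ref{proleib} with the fact (proposition 1.2iii of \cite{R21}) that the maximum of two Leibniz seminorms is Leibniz. The only difference is that you explicitly verify the Leibniz property for $A \mapsto L_0(A^*)$ by feeding the reversed product $B^*A^*$ into the inequality for $L_0$ --- a detail the paper leaves implicit, and which you handle correctly.
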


This leaves open the question as to whether $L_o$ and $\s^\mu$ are strongly Leibniz.
I was not able to adapt the above techniques to show that they are. But in
conversation with David Aldous about all of this (for ordinary probability
spaces), he showed me the ``independent copies trick" for expressing
the standard deviation. (As a reference for its use he referred me to
the beginning of the proof of proposition 1 of \cite{DSt}. 
I have so far not found this trick
discussed in an expository book or article.) A few hours after that
conversation I realized that this trick fit right into the normed
first-order differential calculus framework described in Section \ref{sources}.
But when adapted to the non-commutative setting it seems to work
only when $\mu$ is a tracial state (in which 
case $\s^\mu = L_0$). The ``trick'' goes as follows.
Let $\O = \cA \otimes \cA$ (with the minimal C*-tensor-product norm 
\cite{Blk2, KR2}),
which is in an evident way an $\cA$-bimodule. 
 Set $\nu = \mu \otimes \mu$, which
is a state on $\O =\cA \otimes \cA$ as C*-algebra. Thus $\nu$ determines an inner
product on $\O$ whose norm makes $\O$ into a normed bimodule (because
$\mu$ is tracial).
Let $\o_0 = 1_\cA \otimes 1_\cA$. Then for $A \in \cA$ we have
\begin{eqnarray*}
&{}& \|\o_0 A - A \o_0\|^2_\nu \ 
= \ \< 1_\cA \otimes A - A\otimes 1_\cA, 1_\cA \otimes A - A\otimes 1_\cA \>_\nu   \\
&=& \mu(A^*A) \ - \ \mu(A)\mu(A^*) \ - \ \mu(A^*)\mu(A) \ + \ \mu(A^*A) \\
&=& 2(\mu(A^*A) - |\mu(A)|^2) \ = \ 2\|A-\mu(A)\|^2_\mu  
.\end{eqnarray*}
From Proposition \ref{deriv} we thus obtain:

\begin{pro}
\label{prost}
Let notation be as above, and assume that $\mu$ is a tracial state. 
Then $L_0$, and so $\s^\mu$,
is a strongly Leibniz seminorm on $\cA$.
\end{pro}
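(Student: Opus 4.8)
The plan is to recognize Proposition~\ref{prost} as an immediate consequence of the preceding computation together with Proposition~\ref{deriv}. The essential content is the display just above the statement, which exhibits $L_0$ as arising from a normed first-order differential calculus; once that identification is secured, strong Leibniz-ness is free. So I would not attempt a direct verification of the inequality $L_0(A^{-1}) \leq \|A^{-1}\|^2 L_0(A)$ by hand, but instead package everything into the framework of Section~\ref{sources}.

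Concretely, first I would assemble the data of a differential calculus. Take $\O = \cA \otimes \cA$ with the minimal C*-tensor-product norm, viewed as an $\cA$-bimodule in the evident way, and take $\o_0 = 1_\cA \otimes 1_\cA$. Define the inner derivation $\d(A) = \o_0 A - A\o_0 = 1_\cA \otimes A - A \otimes 1_\cA$. The crucial step is to equip $\O$ with the \emph{right} norm: the state $\nu = \mu \otimes \mu$ induces an inner product $\langle \cdot, \cdot\rangle_\nu$ on $\O$, and I must check that the resulting Hilbert-space norm $\|\cdot\|_\nu$ makes $\O$ into a normed $\cA$-bimodule, i.e. that $\|A\o B\|_\nu \leq \|A\|_\cA \|\o\|_\nu \|B\|_\cA$. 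This is exactly the point where traciality of $\mu$ is used, and I expect it to be the main obstacle: one needs $\nu$ to behave well under the bimodule action, and for a general (non-tracial) $\mu$ the module-norm inequality fails, which is why the hypothesis is present. For a tracial $\mu$, the GNS representation of $\cA \otimes \cA$ on $\cL^2(\O, \nu)$ intertwines the bimodule action with left and right multiplication, and boundedness of those multiplications by the operator norm yields the required bound.

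With the normed calculus in hand, Proposition~\ref{deriv} applies directly: the seminorm $A \mapsto \|\d(A)\|_\nu$ is strongly Leibniz. The displayed computation shows $\|\d(A)\|_\nu^2 = 2\|A - \mu(A)\|_\mu^2$, so $\|\d(A)\|_\nu = \sqrt{2}\, L_0(A)$. Since strong Leibniz-ness is unaffected by scaling a seminorm by a positive constant (both the Leibniz and the inverse inequality are homogeneous of degree one in $L$), $L_0$ itself is strongly Leibniz. Finally, because $\mu$ is tracial we have $\mu(A^*A) = \mu(AA^*)$, whence $\|A - \mu(A)\|_\mu = \|A^* - \mu(A^*)\|_\mu$; the two terms in $\s^\mu$ coincide and $\s^\mu = L_0$. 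Therefore $\s^\mu$ is strongly Leibniz as well, completing the argument.

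The only genuine labor is confirming the normed-bimodule property of $(\O, \|\cdot\|_\nu)$, and verifying that traciality is both what makes it work and what forces the restriction. Everything else is bookkeeping: the inner-product expansion is already carried out in the excerpt, and the passage from $\|\d(A)\|_\nu$ to $L_0$ and then to $\s^\mu$ is routine.
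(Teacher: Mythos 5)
Your proposal is correct and follows essentially the same route as the paper: the ``independent copies trick'' with $\O = \cA \otimes \cA$, $\nu = \mu \otimes \mu$, $\o_0 = 1_\cA \otimes 1_\cA$, the computation $\|\d(A)\|_\nu^2 = 2\|A-\mu(A)\|_\mu^2$, and an appeal to Proposition~\ref{deriv}. You also correctly isolate traciality as what makes $\|\cdot\|_\nu$ a bimodule norm and what gives $\s^\mu = L_0$, which is exactly where the paper places that hypothesis.
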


But by a different path we can
obtain the general case for $\s^\mu$ (but not for $L_0$):

\begin{thm}
\label{thmst}
Let notation be as above (without assuming that $\mu$ is tracial). 
The standard deviation seminorm, $\s^\mu$, is
a strongly Leibniz $*$-seminorm.
\end{thm}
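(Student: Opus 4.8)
The plan is to prove the strong Leibniz inequality first for the non-$*$ seminorm $L_0$ and then transfer it to $\s^\mu$ using the $*$-structure, rather than trying to realize $\s^\mu$ as a commutator seminorm through Proposition \ref{deriv} (that route cannot even reach $L_0$, since a commutator seminorm $A \mapsto \|[D,\pi(A)]\|$ is automatically a $*$-seminorm, whereas $L_0(A) \neq L_0(A^*)$ for non-tracial $\mu$). So I would first reduce the theorem to the single inequality
\[
L_0(A^{-1}) \ \leq \ \|A^{-1}\|^2\, L_0(A),
\]
valid for every invertible $A \in \cA$. Granting this, I recall from Definition \ref{defstd} that $\s^\mu(A) = L_0(A)\vee L_0(A^*)$ and that $\s^\mu$ is a $*$-seminorm, so, using $(A^{-1})^* = (A^*)^{-1}$, we have $\s^\mu(A^{-1}) = L_0(A^{-1})\vee L_0((A^*)^{-1})$. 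The first term is at most $\|A^{-1}\|^2 L_0(A) \leq \|A^{-1}\|^2\s^\mu(A)$ by the displayed inequality; applying that inequality instead to $A^*$ and using $\|(A^*)^{-1}\| = \|A^{-1}\|$ and $L_0(A^*) \leq \s^\mu(A)$, the second term is at most $\|A^{-1}\|^2 L_0(A^*) \leq \|A^{-1}\|^2\s^\mu(A)$. Taking the maximum gives the strong Leibniz inequality for $\s^\mu$.

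To prove the displayed inequality, the device is to subtract a well-chosen scalar from $A^{-1}$. For any scalar $\g \neq 0$ one has the algebraic identity
\[
A^{-1} - \g^{-1} \ = \ -\,\g^{-1}\,A^{-1}(A - \g).
\]
Since $\mu(A^{-1})$ is the closest scalar to $A^{-1}$ in the Hilbert-space norm $\|\cdot\|_\mu$ of $\cH = \cL^2(\cA,\mu)$, and since left multiplication by $A^{-1}$ satisfies $\|A^{-1}C\|_\mu \leq \|A^{-1}\|\,\|C\|_\mu$ (because $C^*(A^{-1})^*A^{-1}C \leq \|A^{-1}\|^2 C^*C$ and $\mu$ is positive, exactly as in the proof of Proposition \ref{proleib}), I obtain for every $\g \neq 0$
\[
L_0(A^{-1}) \ \leq \ \|A^{-1} - \g^{-1}\|_\mu \ \leq \ \frac{\|A^{-1}\|}{|\g|}\,\|A - \g\|_\mu.
\]
The essential point is that I do \emph{not} fix $\g = \mu(A)$; instead I minimize the right-hand side over $\g$.

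This minimization is a one-variable Hilbert-space problem. Writing $w = \g^{-1}$, one has $|\g|^{-1}\|A - \g\|_\mu = \|wA - 1_\cA\|_\mu = \|w\,A\xi - \xi\|$, so the infimum over $\g \neq 0$ is the distance in $\cH$ from the cyclic vector $\xi = 1_\cA$ to the complex line $\bC\,A\xi$; a short computation (essentially equation \ref{eqstd}, via $\mu(A^*A) = L_0(A)^2 + |\mu(A)|^2$) gives
\[
\inf_{\g \neq 0}\ \frac{\|A - \g\|_\mu}{|\g|} \ = \ \frac{L_0(A)}{\mu(A^*A)^{1/2}},
\]
whence $L_0(A^{-1}) \leq \|A^{-1}\|\,L_0(A)\,\mu(A^*A)^{-1/2}$. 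It remains to note that for invertible $A$ the positive element $A^*A$ satisfies $A^*A \geq \|A^{-1}\|^{-2}\,1_\cA$, because $\|A^{-1}\|^{-2}$ is the least point of the spectrum of $A^*A$; applying the state $\mu$ yields $\mu(A^*A) \geq \|A^{-1}\|^{-2}$, i.e. $\mu(A^*A)^{-1/2} \leq \|A^{-1}\|$. Substituting gives $L_0(A^{-1}) \leq \|A^{-1}\|^2 L_0(A)$, as required.

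I expect the crux to be the choice made in the minimization. The tempting value $\g = \mu(A)$ reproduces $L_0(A)$ on the right but leaves the uncontrollable factor $\|A^{-1}\|/|\mu(A)|$, which blows up when $\mu(A)$ is small — for instance $\mu(A) = 0$, as can happen for invertible self-adjoint $A$; the same obstruction defeats the naive attempt to deduce the result from the strong Leibniz property of $L$ on $\cB = \cA \oplus \bC$ by inverting $(A,\mu(A))$. Optimizing over $\g$ removes this difficulty, and the final constant emerges as exactly $\|A^{-1}\|^2$ only because the optimal value $L_0(A)\,\mu(A^*A)^{-1/2}$ is matched precisely by the spectral bound $\mu(A^*A) \geq \|A^{-1}\|^{-2}$. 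It is this matching, rather than any single inequality, that I would watch most carefully.
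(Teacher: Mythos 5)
Your argument is correct, but it is a genuinely different proof from the one in the paper, and it in fact establishes more than the theorem asserts. The paper's proof is structural: it realizes $\s^\mu$ as the commutator seminorm $A \mapsto \|[E,A]\|$, where $E$ is the orthogonal projection of $\cH = \cL^2(\cA,\mu)$ onto $\bC 1_\cA$, by computing the norm of $[E,A]$ separately on $\bC 1_\cA$ and on its orthogonal complement; the strong Leibniz property then comes for free from Proposition \ref{deriv}. That route has the advantage of exhibiting $\s^\mu$ as arising from a (spectral-triple-like) first-order differential calculus, which is what the later matricial and conditional-expectation sections exploit; but, as you correctly observe, it can only ever produce the symmetrized seminorm $L_0(A)\vee L_0(A^*)$. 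Your route is a direct verification: the identity $A^{-1}-\g^{-1} = -\g^{-1}A^{-1}(A-\g)$, the bound $\|A^{-1}C\|_\mu \le \|A^{-1}\|\,\|C\|_\mu$, the minimization $\inf_{\g\neq 0}|\g|^{-1}\|A-\g\|_\mu = L_0(A)\,\mu(A^*A)^{-1/2}$, and the spectral estimate $\mu(A^*A) \ge \|A^{-1}\|^{-2}$ all check out, and the constants do match up exactly as you say. The notable consequence is that you have proved the strong Leibniz inequality for $L_0$ itself, not merely for $\s^\mu$ --- and immediately after Theorem \ref{thmst} the paper explicitly states that whether $L_0$ is strongly Leibniz for non-tracial $\mu$ is a question the author could not answer (supported only by computer evidence in finite dimensions). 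So if you write this up, state that stronger conclusion as a separate proposition and deduce the theorem from it by the maximum argument you give; and be aware that your argument, unlike the paper's, does not by itself place $\s^\mu$ inside the differential-calculus framework needed later in the paper.
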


\begin{proof}
Let $E$ be the orthogonal projection from $\cH = L^2(\cA, \mu)$ onto
its subspace $\bC1_\cA$. Note that for $A \in \cA \subseteq \cH$ we
have $E(A) = \mu(A)$. We use $E$ as a Dirac operator, and we let
$L^E$ denote the corresponding strongly Leibniz seminorm,
defined by
\[
L^E(A) = \|[E,A]\|  ,
\]
where we use the natural action of $\cA$ on $\cH$, and the norm is
that of $\cL(\cH)$.

Let $\cH_0$ be the kernel of $E$, which is just the closure of
$\{B-\mu(B): B \in \cA\}$, and is the orthogonal complement
of $\bC1_\cA$. Notice that 
\[
[E,A](1_\cA) = \mu(A) - A  ,
\]
while if $B \in \cH_0 \cap \cA$ then
\[
[E,A](B) = \mu(AB)  .
\]
Thus $[E,A]$ takes $\bC1_\cA$ to $\cH_0$ and $\cH_0$ to $\bC1_\cA$.
We also see that the norm of $[E,A]$ restricted to $\bC1_\cA$ is
$\|A-\mu(A)\|_\mu$.

Notice next that $L^E(A) = L^E(A - \mu(A))$, so we only need consider
$A$ such that $\mu(A) = 0$, that is, $A \in \cH_0$. For such an $A$ we see 
from above that the norm of the restriction of $[E,A]$ to $\cH_0$ is
no larger than $\|A^*\|_\mu$. But because $A \in \cH_0$ we have
$[E,A](A^*) = \mu(AA^*) = \|A^*\|_\mu^2$. Thus the norm of the restriction 
of $[E,A]$ to $\cH_0$ is exactly $\|A^*\|_\mu$. Putting this all together,
we find that
\[
\|[E,A]\| = \|A-\mu(A)\|_\mu \vee \|A^* - \mu(A^*)\|_\mu = \s^\mu(A)
\]
for all $A \in \cA$. Then from Proposition \ref{deriv} we see that $\s^\mu$
is strongly Leibniz as desired.
\end{proof}

We remark that for every Leibniz $*$-seminorm its null-space (where it takes
value 0) is a $*$-subalgebra, and that the null-space of $\s^\mu$ is the
subalgebra of $A$'s such that $\mu(A^*A) = \mu(A^*)\mu(A)$. When such
an $A$ is self-adjoint one says that $\mu$ is ``definite'' on $A$  ---  see
exercise 4.6.16 of \cite{KR1}. 

The above theorem leaves open the question as to whether $L_0$ is strongly
Leibniz when $\mu$ is not tracial. 
I have not been able to answer this question. Computer
calculations lead me to suspect that it is strongly Leibniz
when $\cA$ is finite-dimensional.
We will see in section \ref{nonstr} some examples of 
closely related Leibniz
seminorms that fail to be strongly Leibniz.

I had asked Jim Pitman about the ``strongly'' part of the 
strongly Leibniz property for the case of
standard deviation on ordinary probability spaces, and he surmised that
it might  be generalized in the following way, and Steve Evans quickly
produced a proof. For later use we treat the case of complex-valued
functions, with $\s^\mu$ defined as $L_0$.

\begin{pro}
\label{procalc}
Let $(X, \mu)$ be an ordinary probability space, and let $f$ be
a complex-valued function in $\cL^\infty(X, \mu)$. For any complex-valued Lipschitz 
function $F$ defined on a subset of $\bC$ containing the range of $f$ we will have
\[
\s^\mu(F \circ f) \leq Lip(F) \s^\mu(f)
\]
where $Lip(F)$ is the Lipschitz constant of $F$.
\end{pro}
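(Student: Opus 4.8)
The plan is to use the ``independent copies trick'' alluded to just before the statement, which rewrites the variance as a double integral over the product probability space $(X \x X, \mu \x \mu)$. First I would record the key identity
\[
\s^\mu(f)^2 \ = \ \frac{1}{2} \int \int |f(x) - f(y)|^2 \ d\mu(x) \, d\mu(y),
\]
valid for every $f \in \cL^\i(X, \mu)$. This is checked by expanding $|f(x) - f(y)|^2 = |f(x)|^2 - f(x)\overline{f(y)} - \overline{f(x)}f(y) + |f(y)|^2$ and integrating term by term: the two diagonal terms each contribute $\mu(|f|^2)$ and the two cross terms each contribute $|\mu(f)|^2$, so the right-hand side equals $\mu(|f|^2) - |\mu(f)|^2 = \|f - \mu(f)\|_\mu^2$, which is $\s^\mu(f)^2$ by the definition of $\s^\mu$ as $L_0$.

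The decisive point is that this symmetrized form converts the nonlinear composition with $F$ into a pointwise estimate. Applying the identity to $F \circ f$ (whose range lies in the domain of $F$ by hypothesis) and then using the Lipschitz bound $|F(f(x)) - F(f(y))| \leq \Lip(F)\,|f(x) - f(y)|$ pointwise on $X \x X$, I obtain
\begin{align*}
\s^\mu(F \circ f)^2 \ &= \ \frac{1}{2} \int \int |F(f(x)) - F(f(y))|^2 \, d\mu(x)\, d\mu(y) \\
&\leq \ \Lip(F)^2 \cdot \frac{1}{2} \int \int |f(x) - f(y)|^2 \, d\mu(x)\, d\mu(y).
\end{align*}
The remaining double integral is exactly $\s^\mu(f)^2$ by the identity again, so $\s^\mu(F \circ f)^2 \leq \Lip(F)^2 \, \s^\mu(f)^2$, and taking square roots yields the claim.

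I do not expect a genuine obstacle here: once the symmetrized representation of the variance is in hand, the Lipschitz hypothesis does all the work, and the argument never touches the form $(\mu(f^*f) - |\mu(f)|^2)^{1/2}$ of the standard deviation. The only points needing a little care are the measurability and integrability of $F \circ f$ and of $|F(f(x)) - F(f(y))|^2$ on the product space, both immediate since $f$ is bounded and $F$ is Lipschitz (hence continuous) on a set containing the range of $f$, so that Fubini applies to $\mu \x \mu$. It is worth remarking that this proof is special to the \emph{commutative} setting: passing $F$ inside by a pointwise inequality has no evident analogue for a noncommutative state, consistent with the fact that the strongly Leibniz property for $\s^\mu$ was obtained above only through the very different projection-as-Dirac-operator argument.
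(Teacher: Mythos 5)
Your proof is correct and is essentially identical to the paper's (Evans's) argument: both rest on the independent-copies identity $\s^\mu(f)^2 = \tfrac12\iint |f(x)-f(y)|^2\,d\mu(x)\,d\mu(y)$ followed by the pointwise Lipschitz bound inside the double integral. The only difference is that you spell out the verification of the identity, which the paper had already recorded in the computation preceding Proposition \ref{prost}.
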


\begin{proof} 
(Evans) By the independent copies trick mentioned before Proposition
\ref{prost} we have
\begin{align*}
(\s^\mu(F\circ f))^2 \ &= \ (1/2)\int |F(f(x))-F(f(y))|^2 \ d\mu(x) \ d\mu(y)  \\
&\leq (1/2)(Lip(F))^2\int |f(x)-f(y)|^2 \ d\mu(x) \ d\mu(y)   \\
&= (Lip(F))^2(\s^\mu(f))^2   .
\end{align*}
\end{proof}

We can use this to obtain the corresponding non-commutative
version:

\begin{thm}
\label{thmcalc}
Let $\cA$ be a unital C*-algebra and let $\mu$ be a state on $\cA$.
Let $A \in \cA$ be normal, that is,  $A^*A = AA^*$. 
Then for any complex-valued Lipschitz
function $F$ defined on the spectrum of $A$ we have
\[
\s^\mu(F(A)) \ \leq \ Lip(F)\s^\mu(A)   ,
\]
where $F(A)$ is defined by the continuous functional calculus for
normal operators, and $Lip(F)$ is the Lipschitz constant of $F$.
\end{thm}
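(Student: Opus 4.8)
The plan is to reduce to the commutative case already established in Proposition \ref{procalc}, exploiting that a normal element generates a commutative C*-subalgebra on which the continuous functional calculus is nothing but Gelfand's isomorphism.

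First I would let $\cC$ denote the unital C*-subalgebra of $\cA$ generated by $A$ and $1_\cA$. Since $A$ is normal, $\cC$ is commutative, and the continuous functional calculus provides a $*$-isomorphism $\Phi \colon \cC \to C(\s(A))$, where $\s(A) \subseteq \bC$ is the (compact) spectrum of $A$, carrying $A$ to the coordinate function $\iota \colon z \mapsto z$ and carrying $F(A)$ to $F$ itself. Next I would transport the state: the functional $g \mapsto \mu(\Phi^{-1}(g))$ is a state on $C(\s(A))$, so by the Riesz representation theorem it is integration against a Radon probability measure $\nu$ on $\s(A)$, making $(\s(A),\nu)$ an ordinary probability space.

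The crucial bookkeeping step is to identify $\s^\mu$ restricted to $\cC$ with the commutative standard deviation for $\nu$. For $C \in \cC$ with $g = \Phi(C)$ one has $\mu(C) = \int g \, d\nu$, and because $\Phi$ is a $*$-homomorphism into a commutative algebra, $\Phi(C^*C) = |g|^2 = \Phi(CC^*)$, so $\mu(C^*C) = \int |g|^2 \, d\nu = \mu(CC^*)$. Feeding this into equation \ref{eqstd} gives $\|C-\mu(C)\|_\mu = \|C^*-\mu(C^*)\|_\mu$, so the maximum in Definition \ref{defstd} collapses and $\s^\mu(C)$ equals the $L_0$-form standard deviation $\s^\nu(g)$ used in Proposition \ref{procalc}.

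Finally I would apply Proposition \ref{procalc} on $(\s(A),\nu)$ with $f = \iota$, whose range lies in $\s(A)$, the domain on which $F$ is Lipschitz. Since $F \circ \iota = F$, this yields $\s^\nu(F) \le \Lip(F)\,\s^\nu(\iota)$, which under the dictionary of the previous paragraph reads exactly $\s^\mu(F(A)) \le \Lip(F)\,\s^\mu(A)$, as desired. I expect the only genuine subtlety to be the identification in the third paragraph --- in particular verifying that the two terms in the maximum of Definition \ref{defstd} coincide on $\cC$ --- with everything else being a routine invocation of Gelfand theory; note in passing that faithfulness of $\mu$ is never used, so the argument applies to arbitrary states.
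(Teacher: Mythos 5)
Your proposal is correct and follows essentially the same route as the paper: restrict to the commutative C*-subalgebra generated by $A$ and $1_\cA$, identify it with $C(\S)$ via the Gelfand/functional-calculus isomorphism, transport $\mu$ to a probability measure on the spectrum, and invoke Proposition \ref{procalc}. Your extra verification that the two terms in the maximum defining $\s^\mu$ coincide on a commutative subalgebra (so that $\s^\mu$ matches the $L_0$-form used in Proposition \ref{procalc}) is a detail the paper leaves implicit, and it is correct.
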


\begin{proof}
Let $\cB$ be the C*-subalgebra of $\cA$ generated by $A$ and $1_\cA$.
Then $\cB$ is commutative because $A$ is normal, and so 
\cite{KR1, Blk2, Str} $\cB$ is isometrically $*$-algebra isomorphic to
$C(\S)$ where $\S$ is the spectrum of $A$ (so $\S$ is a compact 
subset of $\bC$)
and $C(\S)$ is the C*-algebra of continuous complex-valued functions
on $\S$. (This is basicly the spectral theorem for normal operators.) 
Under this isomorphism $A$ corresponds to the function $f(z) = z$
for $z \in \S \subset \bC$. Then $F(A)$ corresponds to the function
$F = F\circ f$ restricted to $\S$. The state $\mu$ restricts to a state on
$C(\S)$, giving a probability measure on $\S$. Then the desired
inequality becomes
\[
\s^\mu(F) \leq Lip(F)\s^\mu(f)
\]
But this follows immediately from Proposition \ref{procalc}
\end{proof}

It would be reasonable to state the content of Proposition
\ref{procalc} and Theorem \ref{thmcalc} as saying that $\s^\mu$
satisfies the ``Markov'' property, in the sense used for example
in discussing Dirichlet forms.

It is easily checked that for a compact metric space $(X,d)$
and with $L$ defined by equation 0.1 one again has
$L(F\circ f) \leq Lip(F)L(f)$ for $f \in C(X)$ and $F$ defined on the 
range of $f$, so that $L$ satisfies the Markov property. 
But it is not clear to me what happens
already for the case of $f$ in the C*-algebra
$C(X, M_n)$ for $n \geq 2$, with $f^* = f$ or $f$ normal, and 
with $F$ defined
on the spectrum of $f$, and with the operator norm of $M_n$
replacing the absolute value in equation 0.1. 
A very special case that is crucial to
\cite{R17} is buried in the proof of proposition 3.3 of \cite{R17} .
It would be very interesting to know what other classes of
strongly Leibniz seminorms satisfy the Markov property 
for the continuous functional
calculus for normal elements in the way given by
Theorem \ref{thmcalc} .

We remark that by considering the function $F(z) = z^{-1}$ 
Theorem \ref{thmcalc} gives an independent proof of the 
``strongly" property of $\s^\mu$ for normal elements of
$\cA$, but not for general elements. Consequently, if $L_0$
fails to be strongly Leibniz it is because the failure is
demonstrated by some non-normal invertible element
of $\cA$.

Let $\cA = M_n$, the algebra of $n \times n$ complex matrices, 
for some $n$, and let $S(A)$ be the state space
of $\cA$, that is, the set of all states on $\cA$. In this setting
Audenaert proved in theorem 9 of \cite{Aud} that for any $A \in \cA$
we have
\[
\max\{\|A-\mu(A)\|_\mu : \mu \in S(A)\}
\ = \ \min\{\|A-\a\| : \a \in \bC\}   .
\]
In \cite{Mln} the left-hand side is called the ``maximal deviation" of $A$.
A slightly simpler proof of Audenaert's theorem is given in 
theorem 3.2 of \cite{BhS2}.  I thank Franz Luef for bringing \cite{BhS2}
to my attention, which led me to \cite{Aud}. We now generalize
Audenaert's theorem to any unital C*-algebra.

\begin{thm}
\label{thmaud}
Let $\cA$ be a unital C*-algebra. For any $A \in \cA$ set 
$\D(A) =  \min\{\|A-\a\| : \a \in \bC\}$.
Then for any $A \in \cA$ we have
\[
\D(A) \ = \ \max\{\|A-\mu(A)\|_\mu : \mu \in S(A)\}  .
\]
\end{thm}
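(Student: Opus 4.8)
The plan is to prove the two inequalities separately; the bound $\max_\mu \|A-\mu(A)\|_\mu \le \D(A)$ is routine, while the reverse inequality carries all the content. For the easy direction, fix $\mu \in S(A)$ and recall from the discussion preceding Proposition \ref{proinf} that $\|A - \mu(A)\|_\mu$ is the distance in $\cH = \cL^2(\cA,\mu)$ from $A$ to the subspace $\bC 1_\cA$; hence $\|A-\mu(A)\|_\mu \le \|A-\a\|_\mu$ for every $\a \in \bC$. Since $\mu$ is a state, $\|C\|_\mu^2 = \mu(C^*C) \le \|C^*C\| = \|C\|^2$, so $\|A-\a\|_\mu \le \|A - \a\|$. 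Taking the infimum over $\a$ gives $\|A-\mu(A)\|_\mu \le \D(A)$ for every $\mu$, and the asserted bound follows.

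For the reverse inequality I would first normalize. The map $\a \mapsto \|A-\a\|$ is continuous and coercive, so it attains its minimum at some $\a_0$; since replacing $A$ by $A - \a_0$ leaves both sides unchanged (using $A - \a_0 - \mu(A-\a_0) = A - \mu(A)$), I may assume $\D(A) = \|A\|$, i.e. that $0$ is a best scalar approximant to $A$. As $\mu \mapsto \mu(A^*A)$ is weak-$*$ continuous on the weak-$*$ compact space $S(A)$, the value $\|A\|^2 = \|A^*A\| = \sup_\mu \mu(A^*A)$ is attained, so $S_0 = \{\mu \in S(A) : \mu(A^*A) = \|A\|^2\}$ is a nonempty weak-$*$ compact convex face of $S(A)$. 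Because $\mu(A^*A) - |\mu(A)|^2 \le \|A\|^2$ always, with equality only if $\mu \in S_0$ and $\mu(A) = 0$, it suffices to exhibit one $\mu \in S_0$ with $\mu(A) = 0$; for such a $\mu$, equation \ref{eqstd} gives $\|A - \mu(A)\|_\mu = \|A\| = \D(A)$, and combined with the easy direction this $\mu$ attains the maximum.

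Thus everything reduces to showing $0 \in K$, where $K = \{\mu(A) : \mu \in S_0\}$ is a compact convex subset of $\bC$. I would argue by contradiction. If $0 \notin K$, separation in $\bC \cong \bR^2$ produces $\b \in \bC$, $|\b| = 1$, with $\mathrm{Re}(\bar\b\,\mu(A)) \ge \d > 0$ for all $\mu \in S_0$; replacing $A$ by $\bar\b A$ (which preserves $\|A\|$, $A^*A$, $S_0$ and $\D(A)$) I may assume $\mathrm{Re}\,\mu(A) \ge \d$ for all $\mu \in S_0$. I then test the real perturbation $A - \e$, using
\[
\mu\big((A-\e)^*(A-\e)\big) = \mu(A^*A) - 2\e\,\mathrm{Re}\,\mu(A) + \e^2 ,
\]
aiming to conclude that $\|A-\e\|^2 = \max_\mu \mu((A-\e)^*(A-\e)) < \|A\|^2$ for small $\e > 0$, which contradicts the optimality of $0$.

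The main obstacle is that this last estimate must hold uniformly over all of $S(A)$, not just over $S_0$: a state outside $S_0$ could have $\mu(A^*A)$ near $\|A\|^2$ while $\mathrm{Re}\,\mu(A)$ is not bounded below. I would resolve this by compactness. If the estimate failed, there would exist, for arbitrarily small $\e > 0$, maximizing states $\mu_\e$ (which exist by weak-$*$ compactness and continuity) with $\mu_\e(A^*A) - 2\e\,\mathrm{Re}\,\mu_\e(A) + \e^2 \ge \|A\|^2$; combined with $\mu_\e(A^*A) \le \|A\|^2$ this forces $\mathrm{Re}\,\mu_\e(A) \le \e/2$ and $\mu_\e(A^*A) \to \|A\|^2$. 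Passing to a weak-$*$ convergent subnet $\mu_\e \to \mu_\infty$ as $\e \to 0$, I obtain $\mu_\infty \in S_0$ yet $\mathrm{Re}\,\mu_\infty(A) \le 0$, contradicting $\mathrm{Re}\,\mu(A) \ge \d$ on $S_0$. Hence $\|A-\e\| < \|A\|$ for small $\e$, the assumption $0 \notin K$ is untenable, and the required $\mu$ exists.
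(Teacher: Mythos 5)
Your proof is correct, and it takes a genuinely different route from the paper's. For the hard inequality the paper simply invokes the proof of theorem 3.2 of \cite{R24}, which supplies a representation $\pi$ of $\cA$ and orthogonal unit vectors $\xi,\eta$ with $\<\eta,\pi(A)\xi\> = \D(A)$; the vector state given by $\xi$ is then shown in a three-line computation (expanding $A\xi$ against $\xi$, $\eta$, and their orthogonal complement) to attain $\D(A)$. You instead give a self-contained variational argument entirely inside the state space: after normalizing so that $0$ is a best scalar approximant, you reduce to showing that some state in the peak set $S_0=\{\mu : \mu(A^*A)=\|A\|^2\}$ annihilates $A$, and you rule out the alternative by separating $0$ from $\{\mu(A):\mu\in S_0\}$ and showing that the perturbation $A-\e$ would then beat the optimal $0$. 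The one place where a naive version of this argument breaks --- the estimate on $\mu((A-\e)^*(A-\e))$ must hold uniformly over states outside $S_0$, where $\mathrm{Re}\,\mu(A)$ need not be bounded below by $\d$ --- is exactly the point you identify and close with the weak-$*$ compactness/subnet argument, so there is no gap. In effect you prove a Kolmogorov-type optimality criterion for best approximation from $\bC 1_\cA$; this buys independence from \cite{R24} at the cost of length, whereas the paper's route is shorter, leans on that representation-theoretic input, and yields the small extra fact that $(A-\mu(A))\xi$ is a multiple of $\eta$ (the observation $\g=0$ at the end of the paper's proof).
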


\begin{proof}
For any $\mu \in S(\cA)$
and any $A \in \cA$ we have $\mu(A^*A) \leq \|A\|^2$,  and so
$\mu(A^*A) - |\mu(A)|^2 \leq \|A\|^2$. Consequently 
$\|A - \mu(A)\|_\mu \leq \|A\|$. But the left-hand side takes value
0 on $1_\cA$, and so $\|A - \mu(A)\|_\mu \leq \|A - \a\|$ for
all $\a \in \bC$. Consequently we have 
\[
\sup\{\|A-\mu(A)\|_\mu : \mu \in S(A)\} \leq \D(A)  .
\]
Thus it suffices to show that for any given $A \in \cA$ there exists
a $\mu \in S(A)$ such that $\|A-\mu(A)\|_\mu = \D(A)$.
By the proof of theorem 3.2 of \cite{R24} there is a $*$-representation $\pi$
of $\cA$ on a Hilbert space $\cH$, and two unit-length vectors $\xi$ and
$\eta$ that are orthogonal, such that $\<\eta, \pi(A)\xi\> = \D(A)$.
For notational simplicity we omit $\pi$ in the rest of the proof.
Let $\mu$ be the state of $\cA$ determined by $\xi$, that is,
$\mu(B) = \<\xi, B\xi\>$ for all $B \in \cA$. Decompose $A\xi$ as
\[
A\xi \ = \ \a\xi \ + \b\eta \ + \g\z 
\]
where $\z$ is a unit vector orthogonal to $\xi$ and $\eta$. Note
that $\b = \<\eta, A\xi\> = \D(A)$. Then
\begin{align*}
& \mu(A^*A) - |\mu(A)|^2 \ = \ \<A\xi, A\xi\> - |\<\xi, A\xi\>|^2  \\
&= |\a|^2 + |\b|^2 + |\g|^2 - |\a|^2 \ = \ |\b|^2 + |\g|^2  = (\D(A))^2 + |\g|^2   . 
\end{align*}
Thus $\|A - \mu(A)\|_\mu = \D(A)$ as desired (and $\g = 0$).
\end{proof}

It would be interesting to have a generalization of Theorem \ref{thmaud}
to the setting of a unital C*-algebra and a unital C*-subalgebra (with
the subalgebra replacing $\bC$ above) along
the lines of theorem 3.1 of \cite{R24}, or to the setting of
conditional expectations discussed in Section
\ref{condexp}.

We remark that theorem 3.2 of \cite{R24} asserts that $\D$
(denoted there by $L$) is a strongly Leibniz seminorm.
We have seen (Proposition \ref{proleib}) 
that each seminorm $A \mapsto \|A-\mu(A)\|_\mu$ is 
Leibniz. This is consistent with the fact that the 
supremum of a family of Leibniz seminorms is
again Leibniz (proposition 1.2iii of \cite{R21}).
Note that $\D$ is a $*$-seminorm even though
 $A \mapsto \|A-\mu(A)\|_\mu$ need not be. 
 This is understandable since $|\<\eta, C^*\xi\>| = |\<\xi, C\eta\>|$
 and we can apply the above reasoning with $\mu$ replaced
 by the state determined by $\eta$. 
 
 Anyway, we obtain:
 
\begin{cor}
\label{coraud}
With notation as above, for every $A \in \cA$ we have
\[
\max \{\s^\mu(A): \mu \in S(\cA)\} \ = \ \D(A).
\]
\end{cor}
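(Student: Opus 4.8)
The plan is to deduce the corollary directly from Theorem \ref{thmaud}, which I may assume. The key preliminary observation is that $\D$ is insensitive to taking adjoints: since $\|A^* - \a\| = \|(A - \bar\a)^*\| = \|A - \bar\a\|$ and $\bar\a$ ranges over all of $\bC$ as $\a$ does, I get $\D(A^*) = \D(A)$. This symmetry is what lets the single statement of Theorem \ref{thmaud} control both terms appearing in $\s^\mu$.

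For the upper bound I would use the inequality half of the proof of Theorem \ref{thmaud}, namely that $\|A - \mu(A)\|_\mu \leq \D(A)$ for every state $\mu$. Applying this same inequality to $A^*$ in place of $A$, and using $\D(A^*) = \D(A)$, gives $\|A^* - \mu(A^*)\|_\mu \leq \D(A)$ for every $\mu$ as well. Since $\s^\mu(A)$ is by Definition \ref{defstd} the maximum of these two quantities, I conclude that $\s^\mu(A) \leq \D(A)$ for all $\mu \in S(\cA)$, and hence $\max\{\s^\mu(A) : \mu \in S(\cA)\} \leq \D(A)$.

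For the reverse inequality I would simply note that $\s^\mu(A) \geq \|A - \mu(A)\|_\mu$, so that taking the supremum over $\mu$ and invoking the equality in Theorem \ref{thmaud} gives $\max\{\s^\mu(A) : \mu \in S(\cA)\} \geq \max\{\|A - \mu(A)\|_\mu : \mu \in S(\cA)\} = \D(A)$. Combining the two bounds yields the claimed equality. In fact the state $\mu$ constructed in the proof of Theorem \ref{thmaud} already realizes the maximum, since for it $\s^\mu(A) \geq \|A - \mu(A)\|_\mu = \D(A)$, which together with the upper bound forces $\s^\mu(A) = \D(A)$.

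There is no substantive obstacle here: the content is entirely in Theorem \ref{thmaud}, and the only point requiring care is to see that the auxiliary adjoint term $\|A^* - \mu(A^*)\|_\mu$ is controlled by the same quantity $\D(A)$, which the symmetry $\D(A^*) = \D(A)$ supplies.
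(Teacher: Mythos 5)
Your proposal is correct and follows essentially the same route the paper takes (the paper leaves the corollary's proof implicit in the remarks preceding it): apply Theorem \ref{thmaud} to both $A$ and $A^*$, using the fact that $\D(A^*) = \D(A)$ to control the adjoint term in $\s^\mu$. Your direct verification that $\|A^* - \a\| = \|A - \bar\a\|$ is a clean way to get that symmetry, and your observation that the extremal state from the proof of Theorem \ref{thmaud} attains the maximum properly justifies writing ``max'' rather than ``sup.''
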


It is easy to see that the supremum of a family of Markov seminorms
is again Markov. We thus obtain:

\begin{cor}
\label{cormar}
With notation as above, the seminorm $\D$ is Markov.
\end{cor}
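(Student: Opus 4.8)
The plan is to deduce this immediately from Corollary \ref{coraud} together with the Markov property of each individual standard-deviation seminorm established in Theorem \ref{thmcalc}. The bridge is the general principle, asserted just above the statement, that a pointwise supremum of Markov seminorms is again Markov.

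First I would record precisely what the Markov property asks: a seminorm $L$ on $\cA$ is Markov if for every normal $A \in \cA$ and every complex-valued Lipschitz function $F$ defined on the spectrum of $A$ one has $L(F(A)) \leq Lip(F)\, L(A)$, where $F(A)$ is given by the continuous functional calculus. Theorem \ref{thmcalc} says exactly that each seminorm $\s^\mu$, for $\mu \in S(\cA)$, has this property. Next I would invoke Corollary \ref{coraud}, which expresses $\D$ as the pointwise supremum (indeed maximum) of the family $\{\s^\mu : \mu \in S(\cA)\}$, namely $\D(A) = \sup\{\s^\mu(A) : \mu \in S(\cA)\}$ for all $A \in \cA$.

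With these two facts in hand, I would fix a normal $A \in \cA$ and a Lipschitz function $F$ on its spectrum, and then compute, using that $Lip(F)$ is a constant independent of $\mu$ and so may be pulled outside the supremum,
\[
\D(F(A)) \ = \ \sup_{\mu}\, \s^\mu(F(A)) \ \leq \ \sup_{\mu}\, Lip(F)\,\s^\mu(A) \ = \ Lip(F)\,\D(A) .
\]
This is precisely the Markov inequality for $\D$, completing the argument.

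I do not anticipate a genuine obstacle, since the corollary is a formal consequence of the preceding results. The only point needing any care is that the functional-calculus element $F(A)$ is one and the same for every state $\mu$ in the family; this is what lets the single inequality of Theorem \ref{thmcalc} be applied uniformly across the whole family before the supremum is taken, and the constant $Lip(F)$ factors out cleanly precisely because it depends only on $F$ and not on $\mu$.
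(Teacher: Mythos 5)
Your proposal is correct and follows exactly the route the paper intends: combine Theorem \ref{thmcalc} (each $\s^\mu$ is Markov) with Corollary \ref{coraud} ($\D$ is the supremum of the $\s^\mu$ over $S(\cA)$) and the observation that a supremum of Markov seminorms is Markov. The computation you write out is precisely the content the paper leaves implicit.
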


 
\section{Matricial seminorms}
\label{matricial}

Let us now go back to the setting of Section \ref{simple}, with 
$\cB = \cA \oplus \bC$ and $\cK = \cH \oplus \bC$, where
$\cH = \cL^2(\cA, \mu)$. As suggested near the end of Section \ref{sources},
the Dirac operator $D$ defined on $\cK$ in Section \ref{simple} will
define a matricial seminorm  $\{L_n\}$ on $\cB$ 
(more precisely an $\cL^\infty$-matricial
seminorm, but we do not need the definition \cite{Pau} of that here). This works
as follows. Each $M_n(\cB)$ has a unique C*-algebra norm coming
from its evident action on $\cK^n$. Then $D$ determines a Dirac
operator $D_n$ on $\cK^n$, namely the $n \times n$ 
matrix with $D$'s on the diagonal and 0's elsewhere. 
Notice that for any $B \in M_n(\cB)$
the effect of taking the commutator with $D_n$ 
is simply to take the commutator with $D$ of each entry of $B$. 
For any $B \in M_n(\cB)$ we then
set $L_n(B) = \|[D_n, B]\|$. Each $L_n$ will be strongly Leibniz.

It is known \cite{Rua, Pau} that if $\cB$ is any C*-algebra with a
($\cL^\infty$-) matricial seminorm $\{\cL_n\}$, and if $\cI$ is a closed
two-sided ideal in
$\cB$, then we obtain a ($\cL^\infty$-) matricial seminorm on $\cB/\cI$
by taking the quotient seminorm of $L_n$ on $M_n(\cB)/M_n(\cI)$
for each $n$. We apply this to the class of examples that we have
been discussing, with $\cI = \bC \subset \cB = \cA \oplus \bC$. We
denote the quotient seminorm of $L_n$ by $\tL_n$. Our main
question now is whether each $\tL_n$ is Leibniz, or even strongly
Leibniz. 

To answer this question we again first need a convenient
expression for the norm of $[D_n, B]$.
From our calculations preceding Theorem \ref{thmd},
for $\{(A_{jk} , \a_{jk})\} \in M_n(\cB)$ its commutator with $D_n$
will have as entries (dropping the initial minus sign) 
\[
(A_{jk}-\a_{jk})\<\xi, \eta\>_c +
\< \eta, \xi\>_c(A_{jk}-\a_{jk})   .
\]
If we let $V$ denote the element of $M_n(\cL(\cK))$ having
$\<\xi, \eta\>_c$ in each diagonal entry and 0's elsewhere, and
if we let $G$ be the matrix $\{A_{jk} - \a_{jk}\}$, viewed as an
operator on $\cK^n$ that takes $\bC^n$ to 0, then the matrix
of commutators can be written as $GV + V^*G$. Now $G$ carries
$\cK^n$ into $\cH^n \subset \cK^n$  and 
so $GV$ carries $\bC^n \subset \cK^n$ into
$\cH^n$ and carries $\cH^n$ to 0. Similarly $V^*G$ carries $\cH^n$ 
into $\bC^n$ and $\bC^n$ to 0. It follows that
\[
\|GV + V^*G\| = \|GV\| \vee \|V^*G\|   .
\]
But $\|V^*G\| = \|G^*V\|$. Thus we basically just need to unwind the 
definitions and obtain a convenient expression for $\|GV\|$.

Now in an evident way $GV$, as an operator from $\bC^n$ 
to $\cH^n$, is given by the matrix $\{\<G_{jk}\xi, \eta\>_c\}$.
But because $\eta = 1 \in \bC \subset \cK$, we see that for 
$\b \in \bC^n$ we have $GV(\b) = \{(\sum_k G_{jk}\b_k)\xi\}$,
an element of $\cH^n$. Then
\[
\|GV(\b)\|^2 \ = \ \sum_j\|(\sum_kG_{jk}\b_k)\xi\|^2  .
\]
But $\cH =\cL^2(\cA, \mu)$ and $\xi = 1_\cA \in \cH$, 
and so for each $j$ we have
\begin{align*}
&\|(\sum_kG_{jk}\b_k)\xi\|^2 \ = \|\sum_k G_{jk}\b_k\|^2_\mu  \\
&=\< \sum_k G_{jk}\b_k, \sum_\ell G_{j\ell}\b_\ell \>_\mu \
 = \sum_{k, \ell}\bar\b_k \mu(G_{jk}^* G_{j, \ell}) \b_\ell    .
\end{align*}
Thus 
\[
\|GV(\b)\|^2 \ = \ \sum_j\<\b, \ \{\mu(G^*_{jk}G_{j\ell})\}\b\> \
= \<\b, \{\mu(G^*G)_{k\ell}\}\b\>   .
\]
From this it is clear that
\[
\|GV\| \ = \ \|\{\mu(G^*G)_{k\ell}\}\|,
\]
where now the norm on the right side 
is that of $M_n$. View $M_n(\cA)$ 
as $M_n \otimes \cA$, and set 
\[
\bE_n^\mu \ = \ id_n \otimes \mu 
\]
where $id_n$ is the identity map of $M_n$ onto itself, so
that $\bE_n^\mu$ is a linear map from $M_n(\cA)$ onto
$M_n$. Then
\[
 \|\{\mu(G^*G)_{k\ell}\}\| \ = \ \|\bE_n^\mu(G^*G)\|   .
\]
For any $H \in M_n(\cA)$ set
\[
\|H\|_\bE \ = \ \|\bE_n^\mu(H^*H)\|^{1/2}   .
\]
The conclusion of the above calculations can then be formulated as:

\begin{pro}
\label{procon}
With notation as above, we have
\[
L_n((A, \a)) = \|A-\a\|_\bE \vee \|A^* - \bar\a\|_\bE
\]
for all $(A,\a) \in M_n(\cB)$.
\end{pro}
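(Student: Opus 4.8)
The plan is to compute the operator norm $L_n((A,\a)) = \|[D_n, (A,\a)]\|$ directly, using the block decomposition $\cK^n = \cH^n \oplus \bC^n$ to split the commutator into two off-diagonal pieces whose norms can be computed separately, and then to identify each piece with one of the two terms $\|\cdot\|_\bE$. Essentially all of the necessary computation has already been assembled in the paragraphs leading up to the statement, so the work consists mainly of organizing those observations into a proof.

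First I would record the shape of the commutator. Since $D_n$ is block-diagonal with a copy of $D$ in each slot, taking the commutator with $D_n$ acts entrywise, and the computation that produced Theorem~\ref{thmd} shows that, up to an overall sign, the $(j,k)$ entry of $[D_n, (A,\a)]$ is $(A_{jk}-\a_{jk})\<\xi,\eta\>_c + \<\eta,\xi\>_c(A_{jk}-\a_{jk})$. Writing $G = \{A_{jk}-\a_{jk}\}$, regarded as an operator on $\cK^n$ that annihilates $\bC^n$, and $V$ for the diagonal operator on $\cK^n$ with $\<\xi,\eta\>_c$ in each slot, this matrix is exactly $GV + V^*G$.

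Next I would establish the norm splitting. Because $\<\xi,\eta\>_c$ sends $\bC$ into $\cH$ and kills $\cH$, while $G$ maps all of $\cK^n$ into $\cH^n$, the operator $GV$ sends $\bC^n$ into $\cH^n$ and annihilates $\cH^n$; dually, $V^*G$ sends $\cH^n$ into $\bC^n$ and annihilates $\bC^n$. Thus the two summands have mutually orthogonal initial spaces and mutually orthogonal ranges, which forces $\|GV + V^*G\| = \|GV\| \vee \|V^*G\|$. To evaluate $\|GV\|$ I would let it act on $\b \in \bC^n$, obtaining $GV(\b) = \{(\sum_k G_{jk}\b_k)\xi\}_j \in \cH^n$, expand $\|GV(\b)\|^2$ using $\|C\xi\|^2 = \mu(C^*C) = \|C\|_\mu^2$ and linearity of the $\cH$ inner product, and collect terms to get $\|GV(\b)\|^2 = \<\b, \{\mu((G^*G)_{k\ell})\}\b\>$. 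Since $\{\mu((G^*G)_{k\ell})\} = \bE_n^\mu(G^*G)$ by definition of $\bE_n^\mu = id_n\otimes\mu$, this gives $\|GV\| = \|\bE_n^\mu(G^*G)\|^{1/2} = \|A-\a\|_\bE$. The symmetric computation, using $\|V^*G\| = \|G^*V\|$ and replacing $G$ by $G^* = A^* - \bar\a$, yields $\|V^*G\| = \|A^*-\bar\a\|_\bE$, and combining the three displays gives the claimed formula.

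There is no genuine analytic obstacle here; the content is elementary Hilbert-space bookkeeping, and the delicate parts are organizational rather than conceptual. The step most in need of care is the norm splitting $\|GV + V^*G\| = \|GV\| \vee \|V^*G\|$: one must check that the orthogonality of both the initial spaces and the ranges of the two summands really does reduce the norm of the sum to the maximum, which is the standard fact that a $2\times 2$ block matrix supported on the anti-diagonal has norm equal to the larger of the two block norms. The other point requiring attention is the index bookkeeping in passing from $\sum_j \mu(G_{jk}^* G_{j\ell})$ to the $(k,\ell)$ entry of $\bE_n^\mu(G^*G)$, where one must respect the convention that the inner product on $\cH$ is linear in the second variable so that the conjugates $\bar\b_k$ land in the correct places.
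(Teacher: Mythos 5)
Your proposal is correct and follows essentially the same route as the paper: the paper's own derivation (in the paragraphs preceding the proposition) likewise writes the commutator as $GV + V^*G$, splits the norm using the orthogonality of ranges and initial spaces across $\cK^n = \cH^n \oplus \bC^n$, and evaluates $\|GV\|$ on vectors $\b \in \bC^n$ to identify it with $\|\bE_n^\mu(G^*G)\|^{1/2} = \|A-\a\|_\bE$. The two points you flag as needing care are exactly the ones the paper handles, and your treatment of them is sound.
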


Now $\bE_n^\mu$ is an example of a ``conditional expectation'',
as generalized to the non-commutative setting \cite{Blk2, KR2} 
(when we view
$M_n$ as the subalgebra $M_n \otimes 1_\cA$ of $M_n(\cA)$).
Thus to study the quotient, $\tL_n$, of $L_n$ we are led to explore our themes 
in the setting of general conditional expectations.


\section{Conditional expectations}
\label{condexp}

Let $\cA$ be a unital C*-algebra and
let $\cD$ be a unital C*-subalgebra of $\cA$ (so $1_\cA \in \cD$).
We recall \cite{Blk2, KR2} that a \emph{conditional expectation}
from $\cA$ to $\cD$ is a bounded linear projection, $\bE$, from $\cA$ onto
$\cD$ which is positive, and has the property that for $A \in \cA$
and $C, D \in \cD$ we have
\[
\bE(CAD) = C\bE(A)D.
\]
(This latter property is often called the ``conditional expectation
property''.) It is known \cite{Blk2, KR2} that conditional expectations
are of norm 1, and in fact are completely positive. One says
that $\bE$ is ``faithful'' if $\bE(A^*A) = 0$ implies that $A=0$.
For simplicity of exposition we will assume that our conditional
expectations are faithful. Given a conditional expectation $\bE$,
one can define a $\cD$-valued inner product on $\cA$ by
\[
\<A,B\>_\bE = \bE(A^*B)
\]
for all $A,B \in \cA$. (See section 2 of \cite{R26}, and \cite{Blk2}.)
From this we get a corresponding (ordinary) norm on $\cA$, defined
by
\[
\|A\|_\bE \ = \ (\|\bE(A^*A)\|_\cD)^{1/2}   .
\]
Actually, to show that this is a norm one needs a suitable
generalization of the Cauchy-Schwartz inequality, for which
see proposition 2.9 of \cite{R26}, or \cite{Blk2}. From the 
conditional expectation property one sees that for
$A, B \in \cA$ and $D \in \cD$ one has
\[
\<A, BD\>_\bE = \<A, B\>_\bE D   .
\]
Accordingly, one should view $\cA$ as a right $\cD$-module.
Since it is evident that $(\<A, B\>_\bE)^* = \< B, A\>_\bE$,
we also have $\<AD, B\>_\bE = D^*\<A, B\>_\bE  $.
It follows that $\|AD\|_\bE \leq \|A\|_\bE\|D\|_\cD$.
When $\cA$ is completed for the norm $\|\cdot\|_\bE$, the
above operations extend to the completion, and one obtains
what is usually called a right Hilbert $\cD$-module \cite{Blk2}.

In this setting we can imitate much of what we did earlier. 
Accordingly, set $\cB = \cA \oplus \cD$. On $\cB$ we can
define a seminorm $L$ by 
\[
L_0((A, D)) = \|A-D\|_\bE.
\]
(Note that $L_0$ need not be a $*$-seminorm.) To see that
$L_0$ is Leibniz, we should first notice that for any $A, B \in \cA$
since $B^*A^*AB \leq \|A\|^2 B^*B$ and $\bE$ is positive, we
have $\bE(B^*A^*AB) \leq \|A\|^2\bE(B^*B)$, so that
\setcounter{equation}{0}
\begin{equation}
\label{bndop}
\|AB\|_\bE \leq \|A\|_\cA\|B\|_\bE .
\end{equation}
 \setcounter{thm}{1}
We can now check that $L_0$ is Leibniz. For $A, B \in \cA$ 
and $C,D \in \cD$ we have 
\begin{align*}
L_0((A,C)(B,D)) &= \|AB -CD\|_\bE \leq \|AB-AD\|_\bE + \|AD-CD\|_\bE   \\
&\leq \|A\|_\cA\|B-D\|_\bE + \|A-C\|_\bE\|D\|_\cA   \\
&\leq \|(A,C)\|_\cB L_0((B,D)) + L_0((A, C))\|(B, D)\|_\cB,
\end{align*}
as desired. Furthermore, $L_0$ is strongly Leibniz, for if $A^{-1}$
and $D^{-1}$ exist, then
\begin{align*}
L_0((A,D)^{-1}) &= \|A^{-1}-D^{-1}\|_\bE = \|A^{-1}(D-A)D^{-1}\|_\bE  \\
&\leq \|A^{-1}\|_\cA \|A-D\|_\bE \|D^{-1}\|_\cA
\leq \|(A,D)^{-1}\|_\cB^2 L_0((A,D)),
\end{align*}
as desired. Since $L_0$ need not be a $*$-norm, we will also want
to use $L_0(A)\vee L_0(A^*)$. Then it is not difficult to put the above
considerations into the setting of the spectal triples mentioned
in Section \ref{sources}, along the lines developed in Section
\ref{simple}. But we do not need to do this here.

We can now consider the quotient, $\tL_0$, of $L_0$ on the quotient
of $\cB$ by its ideal $\cD$, which we naturally identify with $\cA$,
in generalization of what we did in Section \ref{deviat}. Thus we
set
\[
\tL_0(A) = \inf\{ L_0(A-D): D \in \cD\}   .
\]
But we can argue much as one does for Hilbert spaces to obtain:

\begin{pro}
\label{proquo}
For every $A \in \cA$ we have
\[
\tL_0(A) \ = \|A - \bE(A)\|_\bE   .
\]
\end{pro}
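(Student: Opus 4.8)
The plan is to show that $\bE(A)$ is the best approximation to $A$ inside $\cD$ for the Hilbert-module norm $\|\cdot\|_\bE$, in exact analogy with the fact that in a Hilbert space the orthogonal projection minimizes distance. Unwinding the quotient construction, for $A \in \cA$ we have $\tL_0(A) = \inf\{\|A - D\|_\bE : D \in \cD\}$, so it suffices to prove that this infimum is attained at $D = \bE(A)$ and there equals $\|A - \bE(A)\|_\bE$.

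First I would record the orthogonality that singles out $\bE(A)$: since $\bE$ is a projection onto $\cD$ and $\bE(A) \in \cD$, we have $\bE(A - \bE(A)) = \bE(A) - \bE(A) = 0$. Setting $P = A - \bE(A)$, so that $\bE(P) = 0$ (and hence $\bE(P^*) = \bE(P)^* = 0$), I write $A - D = P + D'$ for arbitrary $D \in \cD$, where $D' = \bE(A) - D \in \cD$. The key computation is then a ``Pythagorean'' expansion of $\bE\big((A-D)^*(A-D)\big)$. Expanding $\bE\big((P+D')^*(P+D')\big)$ and applying the conditional-expectation property $\bE(CAD') = C\bE(A)D'$, the two cross terms vanish, since $\bE(P^*D') = \bE(P^*)D' = 0$ and $\bE(D'^*P) = D'^*\bE(P) = 0$, while $\bE(D'^*D') = D'^*D'$ because $D'^*D' \in \cD$. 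This yields
\[
\bE\big((A-D)^*(A-D)\big) \ = \ \bE(P^*P) \ + \ D'^*D',
\]
a sum of two positive elements of $\cD$.

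Finally I would invoke monotonicity of the C*-norm on positive elements: from $0 \leq \bE(P^*P) \leq \bE(P^*P) + D'^*D'$ we get $\|\bE(P^*P)\|_\cD \leq \|\bE(P^*P) + D'^*D'\|_\cD$, and taking square roots gives $\|A - \bE(A)\|_\bE \leq \|A - D\|_\bE$ for every $D \in \cD$, with equality when $D' = 0$, that is $D = \bE(A)$. This identifies the infimum and proves the proposition. The only point requiring any care is the use of the conditional-expectation property to annihilate the cross terms, followed by norm monotonicity on positives; once the decomposition $A - D = P + D'$ is in hand these are routine, and I do not anticipate a genuine obstacle.
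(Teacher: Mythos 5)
Your proposal is correct and follows essentially the same route as the paper: the paper first treats the case $\bE(A)=0$ via the expansion $\bE((A-D)^*(A-D)) = \bE(A^*A) + D^*D \geq \bE(A^*A)$ and then reduces the general case by noting $\bE(A-\bE(A))=0$, which is exactly your decomposition $A-D = P + D'$ carried out in one step. The key ingredients — vanishing of the cross terms via the conditional-expectation property and norm monotonicity on positive elements — are identical.
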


\begin{proof}
Suppose first that $\bE(A) =0$. Then for any $D \in \cD$
\begin{align*}
(L_0(A-D))^2 \ &= \|\bE((A-D)^*(A-D))\|_\cD    \\
&= \ \| \bE(A^*A) - D^*\bE(A) -\bE(A^*)D+D^*D\|_\cD   \\
&=\|\bE(A^*A)+D^*D\|_\cD \ \geq \|\bE(A^*A)\|_\cD   .
\end{align*}
Thus 0 is a (not necessarily unique) closest point in $\cD$
to $A$ for the norm $\|\cdot\|_\bE$. Thus $\tL_0(A) = \|A\|_\bE$.
For general $A$ note that $\bE(A-\bE(A)) = 0$. From the above
considerations it follows that $\bE(A)$ is a closest point in $\cD$
to $A$.
\end{proof}
Note that again this expression for $\tL_0$ need not be a $*$-seminorm.
In view of the discussion in Section \ref{deviat} it is appropriate to make:

\begin{defn}
\label{defsdex}
With notation as above, for $A \in \cA$ set
\[
\s^\bE(A) = \tL_0(A) \vee \tL_0(A^*) = \|A-\bE(A)\|_\bE \vee \|A^*-\bE(A^*)\|_\bE  ,
\]
and call it
the \emph{standard deviation} of $A$ \emph{with respect to} $\bE$.
\end{defn}

We can now argue much as we did in the proof of Proposition \ref{proleib}
to obtain:

\begin{pro}
\label{prostleib}
With notation as above, both $\tL_0$ and $\s^\bE$ are Leibniz seminorms.
\end{pro}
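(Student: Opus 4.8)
The plan is to adapt the proof of Proposition \ref{proleib} to the $\cD$-valued setting, the only new feature being that the comparison element now lives in the subalgebra $\cD$ rather than in $\bC$, so the two error terms must be controlled by the left and right module inequalities separately. That $\tL_0$ is a seminorm is automatic, since $A \mapsto A - \bE(A)$ is linear and $\|\cdot\|_\bE$ is a norm; the content of the proposition is the Leibniz inequality together with vanishing at $1_\cA$.

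First I would show that $\tL_0$ is Leibniz. Fix $A, B \in \cA$. Since $\bE(A)\bE(B) \in \cD$, Proposition \ref{proquo} (that $\bE(AB)$ is a closest point of $\cD$ to $AB$ in $\|\cdot\|_\bE$) gives
\[
\tL_0(AB) = \|AB - \bE(AB)\|_\bE \leq \|AB - \bE(A)\bE(B)\|_\bE .
\]
Inserting $A\bE(B)$ and using the triangle inequality splits the right-hand side as $\|A(B - \bE(B))\|_\bE + \|(A - \bE(A))\bE(B)\|_\bE$. For the first term I would invoke inequality (\ref{bndop}), treating $A$ as a left multiplier, to bound it by $\|A\|_\cA\,\|B - \bE(B)\|_\bE = \|A\|_\cA\,\tL_0(B)$. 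For the second term I would use the right $\cD$-module inequality $\|CD\|_\bE \leq \|C\|_\bE\|D\|_\cD$ established just before the proposition to bound it by $\|A - \bE(A)\|_\bE\,\|\bE(B)\|_\cD = \tL_0(A)\,\|\bE(B)\|_\cD$, and then use $\|\bE(B)\|_\cD \leq \|B\|_\cA$ (since $\bE$ has norm one). Combining yields $\tL_0(AB) \leq \|A\|_\cA\,\tL_0(B) + \tL_0(A)\,\|B\|_\cA$; and $\tL_0(1_\cA) = 0$ because $1_\cA \in \cD$ forces $\bE(1_\cA) = 1_\cA$.

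For $\s^\bE$ I would first note that the seminorm $A \mapsto \tL_0(A^*)$ is again Leibniz: applying the Leibniz inequality for $\tL_0$ to the reversed product $B^*A^*$ and using $\|A^*\|_\cA = \|A\|_\cA$ gives precisely the Leibniz inequality for $A \mapsto \tL_0(A^*)$. Since $\s^\bE$ is the maximum of these two Leibniz seminorms, it is Leibniz by proposition 1.2iii of \cite{R21}.

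The only point requiring genuine care is the correct bookkeeping of the two module inequalities: using the left ($\cA$-module, operator-norm) estimate for $\|A(B-\bE(B))\|_\bE$ and the right ($\cD$-module) estimate for $\|(A-\bE(A))\bE(B)\|_\bE$. In the scalar case of Proposition \ref{proleib} these collapse, because $\mu(A)\mu(B)$ is a scalar and the two error terms are handled by one operator-norm bound together with $|\mu(B)| \leq \|B\|_\cA$; in the non-commutative setting $\bE(A)\bE(B)$ sits in $\cD$ and the two bounds are genuinely distinct, so one must keep track of which factor is being treated as a bounded multiplier and which as a module element.
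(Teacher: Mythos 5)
Your proof is correct and follows essentially the same route as the paper: bound $\tL_0(AB)$ by $\|AB-\bE(A)\bE(B)\|_\bE$ using the best-approximation property from Proposition \ref{proquo}, split by the triangle inequality, and control the two terms with inequality (\ref{bndop}) and the right $\cD$-module bound respectively, then pass to $\s^\bE$ as a maximum of two Leibniz seminorms. Your explicit bookkeeping of which module inequality handles which term, and the check that $A \mapsto \tL_0(A^*)$ is Leibniz, only make explicit what the paper leaves implicit.
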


\begin{proof}
Let $A, B \in \cA$. By the calculation in the proof of Proposition \ref{proquo} we know
that $\bE(A)\bE(B)$ is no closer to $AB$ for the norm $\|\cdot\|_\bE$
than is $\bE(AB)$. Thus
\begin{align*}
\tL_0(AB) \ &= \ \|AB-\bE(AB)\|_\bE \ \leq \ \|AB-\bE(A)\bE(B)\|_\bE  \\
& \leq \|A(B-\bE(B))\|_\bE \ + \ \|(A-\bE(A))\bE(B)\|_\bE    \\
& \leq \|A\|_\cA \tL_0(B) \ + \ \tL_0(A)\|B\|_\cA   ,
\end{align*}
where we have used equation 5.2 and, implicitly, the conditional
expectation property.
Thus $\tL_0$ is Leibniz. As mentioned earlier, the maximum of two
Leibniz seminorms is again Leibniz, and so $\s^\bE$ too is Leibniz.
\end{proof}

This leaves open the question as to whether $\tL_0$ and $\s^\bE$ are
strongly Leibniz. We will try to imitate the proof of Theorem \ref{thmst}.
We have mentioned earlier that $\cA$, equipped with its 
$\cD$-valued inner product and completed for the corresponding norm,
is a right Hilbert $\cD$-module. If $Z$ is any right Hilbert $\cD$-module,
the appropriate corresponding linear operators on $Z$ are the bounded
adjointable right $\cD$-module endomorphisms (as in definition 2.3 of \cite{R26},
or in \cite{Blk2}), that is, the norm-bounded endomorphisms $T$ for which there
is another such endomorphism, $T^*$, such that
$\<y, Tz\>_\bE = \<T^*y, z\>_\bE$ for all $y, z \in Z$. (This is not 
automatic.) These endomorphisms form a C*-algebra for the operator
norm.

For our situation of $\cA$ equipped with the $\cD$-valued inner
product given by $\bE$, the operators that we are about to use all
carry $\cA$ into itself, and so we do not need to form the completion,
as long as we check that the operators are norm-bounded and have
adjoints. We will denote the algebra of such operators by
$\cL^\infty(\cA, \bE)$, in generalization of our earlier $\cL^\infty(\cA, \mu)$.
It is a unital pre-C*-algebra. 

Each $A \in \cA$ determines an operator in $\cL^\infty(\cA, \bE)$ via
the left regular representation. We denote this operator by $\hat A$.
The proof that $\hat A$ is norm-bounded is essentially equation
\ref{bndop}. It is easily checked that the adjoint of $\hat A$ is
$(A^*)\hat{}$, and that in this way we obtain a $*$-homomorphism
from $\cA$ into $\cL^\infty(\cA, \bE)$. Because $\bE$ is faithful,
this homomorphism will be injective, and so isometric.

Perhaps more surprising is that $\bE$ too acts as an operator in 
$\cL^\infty(\cA, \bE)$. (See proposition 3.3 of \cite{R26}.)
By definition $\bE$ is a right $\cD$-module endomorphism.
For any $A \in \cA$ we have
\[
\<\bE(A), \bE(A)\>_\bE = \bE(\bE(A^*)\bE(A)) = \bE(A^*)\bE(A).
\]
But $\bE(A^*)\bE(A) \leq \bE(A^*A)$ by the calculation (familiar
for the variance, and related to equation 3.2 above) that
\[
0 \leq \bE((A^* - \bE(A^*))(A-\bE(A))) = \bE(A^*A) - \bE(A^*)\bE(A).
\]
Thus $\|\bE(A)\|_\bE \leq \|A\|_\bE$, so that $\bE$ is a norm-bounded
operator. Furthermore, for $A, B \in \cA$ we have
\begin{align*}
\<A, \bE(B)\>_\bE \ &= \ \bE(A^*\bE(B)) \ = \ \bE(A^*)\bE(B)  \\
&= \ \bE(\bE(A^*)B) \ = \ \<\bE(A), B\>_\bE  ,
\end{align*}
so that $\bE$ is ``self-adjoint''. When we view $\bE$ as an element
of $\cL^\infty(\cA, \bE)$ we will denote it by $\hat \bE$.

Let us now use $\hat \bE$ as a ``Dirac operator'' to obtain a
strongly Leibniz $*$-seminorm, $L^\bE$, on $\cA$. Thus $L^\bE$
is defined by
\[
L^\bE(A) = \|[\hat \bE, \hat A]\|   ,
\]
where the norm here is that of $\cL^\infty(\cA, \bE)$. We now
unwind the definitions to obtain a more convenient expression
for $L^\bE$. Notice that $\hat \bE^2 = \hat \bE$. Now if $\cA$ is
any unital algebra and if $a, e \in \cA$ with $e^2 = e$, then
because $[a, \cdot]$ is a derivation of $\cA$, we find that
$e[a,e]e = 0$. Similarly we see that $(1-e)[a, e](1-e) = 0$.
Let $\cY$ be the kernel of $\hat \bE$, so that it consists of
the elements of $\cA$ of the form $A-\bE(A)$. 
Note that $\cY$ and $\cD$ are ``orthogonal'' for $\<\cdot , \cdot \>_\bE$,
and that
$\cA = \cY \oplus \cD$. The calculations just above
show that $[\hat \bE, \hat A]$ carries $\cD$ into $\cY$
and $\cY$ into $\cD$. From this it follows that
\[
\|[\hat \bE, \hat A]\| \ = \ \|\hat \bE[\hat \bE, \hat A](I - \hat \bE)\| \vee
\|(I-\hat \bE)[\hat \bE, \hat A]\hat \bE\|   
\]
for all $A \in \cA$, where $I$ is the identity operator on $\cA$. But note that 
\[
(|\hat \bE[\hat \bE, \hat A](I - \hat \bE))^* \ 
= \ -(I-\hat \bE)[\hat \bE, \hat A^*]\hat \bE   .
\]
Thus we basically only need a convenient expression 
for $\|(I-\hat \bE)[\hat \bE, \hat A]\hat \bE\|$, and the latter is equal to
$\|[\hat \bE, \hat A]|_\cD\|$.

Now for $D \in \cD$ we have 
\begin{align*}
\|[\hat \bE, \hat A](D)\|_\bE \ &= \ \|\bE(AD) - A\bE(D)\|_\bE \ = \ \|(\bE(A)-A)D\|_\bE  \\
 & \leq \|A-\bE(A)\|_\bE \|D\|_\cA   .
\end{align*}
From this and the result when $D = 1_\cA$ we see that
\[
\|[\hat \bE, \hat A]|_\cD\| \ = \ \|A-\bE(A)\|_\bE = \tL_0(A)   .
\]
It follows that 
\[
L^\bE(A)) \ = \ \|A-\bE(A)\|_\bE \vee \|A^*-\bE(A^*)\|_\bE \ = \ \s^\bE(A)
\]
for all $A \in \cA$.
In view of what was said in Section \ref{sources} about first-order
differential calculi, we have thus obtained:

\begin{thm}
\label{thmexleib}
With notation as above, $\s^\bE$ is a strongly Leibniz $*$-seminorm.
\end{thm}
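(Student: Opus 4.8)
The plan is to realize $\s^\bE$ as the seminorm coming from a normed first-order differential calculus, so that Proposition \ref{deriv} delivers the strongly Leibniz property for free. The whole content is therefore to exhibit such a calculus explicitly. Following the pattern of the proof of Theorem \ref{thmst}, I would take the conditional expectation $\bE$ itself, viewed as the operator $\hat\bE$ on the right Hilbert $\cD$-module $\cA$ (completed), to play the role of the ``Dirac operator'' $\o_0$. Concretely, I would let $\O$ be the pre-C*-algebra $\cL^\infty(\cA,\bE)$ of bounded adjointable $\cD$-module endomorphisms, which is an $\cA$-bimodule via the left regular representation $A \mapsto \hat A$, and define the inner derivation $\d(A) = [\hat\bE, \hat A]$. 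Since $\O$ is a C*-algebra with its operator norm and $\pi$ is an isometric $*$-representation (because $\bE$ is faithful), $(\O, \d)$ is a normed first-order differential calculus, and $L^\bE(A) = \|\d(A)\|$ is automatically strongly Leibniz by Proposition \ref{deriv}.

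The real work is then to identify $L^\bE$ with $\s^\bE$, and here I would reproduce the computation already laid out in the text immediately preceding the statement. First I would record that $\hat\bE$ is an idempotent, and use the elementary algebraic fact that for a derivation $[a,\cdot]$ and an idempotent $e$ one has $e[a,e]e = 0$ and $(1-e)[a,e](1-e) = 0$; this shows $[\hat\bE, \hat A]$ interchanges the two summands of the orthogonal decomposition $\cA = \cY \oplus \cD$, where $\cY = \ker\hat\bE = \{A - \bE(A)\}$. Consequently the operator norm of $[\hat\bE,\hat A]$ splits as the maximum of the norms of its two off-diagonal blocks, and the adjoint relation $(\hat\bE[\hat\bE,\hat A](I-\hat\bE))^* = -(I-\hat\bE)[\hat\bE,\hat A^*]\hat\bE$ means the block for $A^*$ is the ``transpose'' of the block for $A$. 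So it suffices to compute a single block, namely the norm of $[\hat\bE,\hat A]$ restricted to $\cD$.

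For that restriction I would compute directly, for $D \in \cD$, that $[\hat\bE,\hat A](D) = \bE(AD) - A\bE(D) = (\bE(A) - A)D$ using the conditional expectation property, giving the bound $\|[\hat\bE,\hat A](D)\|_\bE \le \|A - \bE(A)\|_\bE \|D\|_\cA$; taking $D = 1_\cA$ shows this bound is attained, so the block norm equals $\|A - \bE(A)\|_\bE = \tL_0(A)$. Combining the two blocks yields $L^\bE(A) = \tL_0(A) \vee \tL_0(A^*) = \s^\bE(A)$, which by the opening remark is strongly Leibniz.

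The main obstacle is not any single computation but the preliminary structural verification that makes the differential-calculus framework applicable in this generality: namely that $\hat\bE$ is genuinely a bounded, self-adjoint, adjointable endomorphism of the Hilbert $\cD$-module, and that each $\hat A$ is bounded and adjointable with $(\hat A)^* = (A^*)\hat{}\,$. These facts rest on the module Cauchy–Schwarz inequality and the operator inequality $\bE(A^*)\bE(A) \le \bE(A^*A)$, and they are precisely what the preceding discussion establishes. Once $\cL^\infty(\cA,\bE)$ is in hand as a C*-algebra with $\cA$ isometrically and $*$-preservingly embedded, the remainder is the routine block-decomposition bookkeeping sketched above, and Proposition \ref{deriv} closes the argument with no extra effort.
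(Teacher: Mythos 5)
Your proposal is correct and follows essentially the same route as the paper: the argument there is precisely to realize $\s^\bE$ as $\|[\hat\bE,\hat A]\|$ in $\cL^\infty(\cA,\bE)$, use the idempotent block decomposition of the commutator together with the adjoint relation between the two off-diagonal blocks, compute the block norm on $\cD$ via $[\hat\bE,\hat A](D)=(\bE(A)-A)D$, and invoke Proposition \ref{deriv}. The structural prerequisites you flag (boundedness and self-adjointness of $\hat\bE$, isometry of $A\mapsto\hat A$) are exactly the ones the paper verifies beforehand.
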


We can immediately apply this to the matricial setting of Section
\ref{matricial}. For that setting and any $n$ we have $\bE = \bE_n^\mu$.
Then, in the notation of the present setting, the conclusion of
Proposition \ref{procon} is again that 
\[
L_n((A,\a)) = \|A-\a\|_\bE \vee  \|A^*-\bar\a\|_\bE
\]
for all $(A,\a) \in M_n(\cB)$. Note that for the present situation, 
the $L$ of the earlier part of this section is given exactly by
$L_0((A,\a)) = \|A-\a\|_\bE$. Then from Proposition \ref{proquo}
we see that
\[
\tL_n(A) \ = \ \|A-\bE(A)\|_\bE \vee \|A^*-\bE(A^*)\|_\bE
\]
for any $A \in \cA$. And the right-hand side is just the corresponding
standard deviation, which we will denote by $\s_n^\bE$. 
Then from Theorem \ref{thmexleib}
we obtain:

\begin{thm}
Let $\cA$ be a unital C*-algebra and let $\mu$ be a faithful state on $\cA$.
For each natural number $n$ let $\bE_n^\mu$ be the corresponding
conditional expectation from $M_n(\cA)$ onto $M_n \subset M_n(\cA)$,
and let $\|\cdot\|_{\bE_n^\mu}$ be the associated norm. Then the standard
deviation $\s_n^\mu$ on $M_n(\cA)$ defined by
\[
\s_n^\mu(A) \ = \ \|A-\bE_n^\mu(A)\|_{\bE_n^\mu} \vee 
\|A^*-\bE_n^\mu(A^*)\|_{\bE_n^\mu}
\]
for all $A \in M_n(\cA)$ is a strongly Leibniz $*$-seminorm.
The family $\{\s_n^\mu\}$ is a strongly Leibniz ($\cL^\infty$)-matricial
$*$-seminorm on $\cA$.
\end{thm}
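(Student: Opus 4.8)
The plan is to deduce both assertions from the apparatus already built up, so that the first sentence becomes an instance of Theorem \ref{thmexleib} and the second follows by combining that with the quotient-of-matricial-seminorm result recalled at the start of Section \ref{matricial}.

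First I would confirm that $\bE_n^\mu = id_n \otimes \mu$ really is a faithful conditional expectation from $M_n(\cA)$ onto the subalgebra $M_n = M_n \otimes 1_\cA$. Positivity is inherited from $\mu$, the projection property uses $\mu(1_\cA) = 1$, the conditional expectation property $\bE_n^\mu(CAD) = C\bE_n^\mu(A)D$ for $C, D \in M_n$ is immediate entrywise, and faithfulness follows from that of $\mu$ by inspecting the diagonal entries of $\bE_n^\mu(A^*A)$. With this verified, the norm $\|\cdot\|_{\bE_n^\mu}$ and the standard deviation $\s^{\bE_n^\mu}$ of Definition \ref{defsdex} are precisely the objects in the statement, so $\s_n^\mu = \s^{\bE_n^\mu}$, and Theorem \ref{thmexleib} yields at once that each $\s_n^\mu$ is a strongly Leibniz $*$-seminorm. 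This disposes of the first sentence.

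For the second sentence I would establish the matricial coherence and the level-wise strong Leibniz property by separate routes. The family $\{L_n\}$ on $\cB$ is matricial because it is manufactured from the single Dirac operator $D$ on $\cK$ via $L_n = \|[D_n, \cdot\,]\|$: the commutator with $D_n$ acts entrywise, and the operator-space structure on $M_n(\cL(\cK)) = \cL(\cK^n)$ furnishes the coherence. Applying the quotient theorem \cite{Rua, Pau} to the ideal $\{0\}\oplus\bC \subset \cB$, whose quotient is $\cA$, shows that $\{\tL_n\}$ is again an $\cL^\infty$-matricial seminorm, and Proposition \ref{procon} together with Proposition \ref{proquo} (taking $\bE = \bE_n^\mu$) identifies $\tL_n$ with $\s_n^\mu$. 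Since each $\s_n^\mu$ is strongly Leibniz by the previous paragraph, the family $\{\s_n^\mu\}$ is a strongly Leibniz matricial $*$-seminorm.

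The delicate point, and the reason the two properties must be treated apart, is that passage to a quotient need not preserve the Leibniz or strongly Leibniz property---indeed that is the very phenomenon this paper exists to exhibit---so one cannot simply transport strong Leibniz-ness of the $L_n$ across the quotient. Matricial coherence, by contrast, is a purely operator-space condition that survives quotients, so it may be carried along by \cite{Rua, Pau}; the strongly Leibniz property at each level has instead to be re-proved directly, which is exactly what the independent construction of $\hat\bE$ as a Dirac operator behind Theorem \ref{thmexleib} achieves. I expect this separation to be the only genuinely non-routine feature; checking that $\bE_n^\mu$ is a faithful conditional expectation is elementary.
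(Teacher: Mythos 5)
Your proposal is correct and follows essentially the same route as the paper: the paper likewise obtains the level-wise strongly Leibniz property as an instance of Theorem \ref{thmexleib} applied to $\bE_n^\mu$, and obtains the matricial coherence by identifying $\tL_n$ with $\s_n^\mu$ via Propositions \ref{procon} and \ref{proquo} and invoking the quotient result of \cite{Rua, Pau}. Your explicit verification that $\bE_n^\mu$ is a faithful conditional expectation, and your observation that the strongly Leibniz property cannot simply be transported across the quotient but must be re-established at each level, are both points the paper leaves implicit.
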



\section{Leibniz seminorms that are not strongly Leibniz}
\label{nonstr}

Let us return now to the case of a general conditional expectation 
$\bE:\cA \to \cD$. We saw in Proposition \ref{prostleib} that the
seminorm $\tL_0$ on $\cA$ defined by $\tL_0(A) = \|A-\bE(A)\|_\bE$
is a Leibniz seminorm. So we can ask whether it too is strongly
Leibniz. We will now show that it need not be. One evening while at
a conference I began exploring this question. It occurred
to me to consider what happens to unitary elements of $\cA$. 
If $U$ is a unitary element of $\cA$ and if $\tL_0$ is strongly Leibniz, 
then we will have
\[
\tL_0(U^{-1}) \leq \tL_0(U) \quad \mathrm{and} \quad \tL_0(U) \leq \tL_0(U^{-1})
\]
so that $\tL_0(U^{-1}) = \tL_0(U)$. Since $U^{-1} = U^*$, we would thus
have $\tL_0(U^*) = \tL_0(U)$. If $\tL_0$ is a $*$-seminorm, then this is
automatic. But $\tL_0$ may not be a $*$-seminorm. Now
\begin{align*}
\tL_0(U) \ &= \ \|U-\bE(U)\|_\bE \ = \ \|\bE((U^* - \bE(U^*))(U-\bE(U))\|_\cA^{1/2}   \\
&= \ \|1_\cA - \bE(U^*)\bE(U)\|_\cA^{1/2}   .
\end{align*}
So the question becomes whether $\|1_\cA - \bE(U^*)\bE(U)\|_\cA$ can
be different from $\|1_\cA - \bE(U)\bE(U^*)\|_\cA$. But
$\|1_\cA - \bE(U)\bE(U^*)\|_\cA$ is equal to $1-m$ where $m$ is
the smallest point in the spectrum of $\bE(U)\bE(U^*)$.
Now the spectrum of $\bE(U^*)\bE(U)$ is equal to that of
$\bE(U)\bE(U^*)$ except possibly for the value 0. (See
proposition 3.2.8 of
of \cite{KR1}.) Thus the question becomes: Is there an example of
a conditional expectation $\bE:\cA \to \cD$ and a unitary element
$U$ of $\cA$ such that $\bE(U)\bE(U^*)$ is invertible but
$\bE(U^*)\bE(U)$ is not invertible? The next day I asked this
question of several attendees of the conference who had some
expertise is such matters. The following morning, shortly before I
was to give a talk on the topic of this paper, Sergey Neshveyev 
gave me the following example (which I have very slightly reformulated).

\begin{exam}
\label{exnesh}
Suppose that one can find a unital C*-algebra $\cD$ containing two partial
isometries $S$ and $T$ and two unitary operators $V$ and $W$ such
that, for $R = S +T$, we have
\begin{itemize}
\item[i)] $R^*R$ is invertible but $RR^*$ is not invertible,
\item[ii)] $S^* = VTW$   .
\end{itemize}
Then let $\cA = M_2(\cD)$, and define a unitary operator $U$
in $\cA$ by
\[
U \ = \ 
\begin{pmatrix}  V & 0  \\
                          0 & 1                         
\end{pmatrix}   
\begin{pmatrix}  T & (1-TT^*)^{1/2}  \\
                          -(1-T^*T)^{1/2} & T^*                         
\end{pmatrix} 
\begin{pmatrix}  W & 0  \\
                          0 & 1                         
\end{pmatrix}   .
\]
(See the solution of problem 222 of \cite{Hlm}.)
Let $\tau$ denote the normalized trace, i.e. the tracial state,
on $M_2$, and let $\bE = \tau \otimes id$ where $id$ is the
identity map on $\cA$. Then $\bE$ is a conditional expectation
from $\cA$ onto $\cD$, where $\cD$ is identified with $I_2 \otimes \cD$
in $M_2 \otimes \cD = \cA$. Then
\[
\bE(U) \ = \ (S^* + T^*)/2 \ = \ R^*/2  .
\]
Consequently $\bE(U)\bE(U^*)$ is invertible but
$\bE(U^*)\bE(U)$ is not invertible, as desired.

It remains to show that there exist operators $S, T, V, W$ satisfying
the properties listed above. Let $\cH = \ell^2(\bZ)$ with its
standard orthonormal basis $\{e_n\}$, and let $\cD = \cL(\cH)$.
Let $B$ denote the right bilateral shift operator on $\cH$, so
$Be_n = e_{n+1}$ for all $n$. Let $J$ be the unitary operator
determined by $Je_n = e_{-n}$ for all $n$, and let $P$ be the 
projection determined by $Pe_n = e_n$ if $n \geq 0$ and 0
otherwise. Set $S = JBP$ and $T= BPJ$, and set $R=S+T$.
It is easily checked that
$R^*Re_n = e_n$ if $n \neq 0$ while $R^*Re_0 = 2e_0$,
so that $R^*R$ is invertible, but $R^*e_0 = 0$ so that
$RR^*$ is not invertible, as desired. 
Furthermore, if we set $V = B^{-1}$ and $W = B$, then it is easily
checked that $S^* = VTW$ as desired.
\end{exam}

The above example provides the first Leibniz seminorm $L$ that I
know of that is not strongly Leibniz, and so can not be obtained from 
a normed
first-order differential calculus. But motivated by the above example
we can obtain simpler examples, which are not so closely related
to conditional expectations.

\begin{exam}
\label{excomp}
Let $\cA$ be a unital C*-algebra, and let $P$ be a projection in
$\cA$ (with $P^*=P$). Let $P^\perp = 1_\cA - P$. Define  $\g$
on $\cA$ by
\[
\g(A) = P^\perp AP
\]
for all $A \in \cA$. Then $\g$ is usually not a derivation, but we have
\begin{align*}
\g(AB) &= P^\perp ABP - P^\perp APBP + P^\perp APBP   \\
&= P^\perp A(P^\perp BP) + (P^\perp A P)BP
= \g(A\g(B) + \g(A)B)) 
\end{align*}
for all $A, B \in \cA$. Now set
\[
L(A) = \|\g(A)\|
\]
for all $A \in \cA$. Because $\g$ is norm non-increasing, it is
clear from the above calculation that $L$ is a Leibniz seminorm. 
It is also clear that $L$ may not be a $^*$-seminorm. We remark
that if $L$ is restricted to any unital C*-subalgebra of $\cA$,
without requiring that $P$ be in that subalgebra, we obtain again
a Leibniz seminorm on that subalgebra.

We can ask whether $L$ is strongly Leibniz. The following
example shows that it need not be. 
Much as in Example \ref{exnesh},
we use the fact that if $L$ is strongly Leibniz then for any unitary
element $U$ in $\cA$ we must have $L(U^*) = L(U)$. 

Let $\cH = \ell^2(\bZ)$ with its
standard orthonormal basis $\{e_n\}$, and let $\cA = \cL(\cH)$.
Let $U$ denote the right bilateral shift operator on $\cH$, so
$Ue_n = e_{n+1}$ for all $n$, and let $P$ be the 
projection determined by $Pe_n = e_n$ if $n \geq 0$ and 0
otherwise. Then it is easily seen that $P^\perp UP= 0$ while
$P^\perp U^*Pe_0 = e_{-1}$. Thus $L(U) = 0$ while
$L(U^{-1})=1$. 

We now show that if $P\cA P$ is finite dimensional, or
at least has a finite faithful trace, then
$L(U^*) = L(U)$ for any unitary element $U$ of $\cA$.
Notice that
\[
\|P^\perp UP\|^2 = \|PU^*P^\perp UP\| = \|P - PU^*PUP\|
= 1-m
\]
where $m$ is the minimum of the spectrum of $PU^*PUP$
inside $P\cA P$. On applying this also with $U$ replaced by
$U^*$, we see, much as in Example \ref{exnesh}, that
$L(U) \neq L(U^*)$ exactly if one of  $PU^*PUP$
and $PUPU^*P$ is invertible in $P\cA P$ and the other is not.
This can not happen if $P\cA P$ has a finite faithful trace.
But this does not prove that $L$ is strongly Leibniz in that
case.

For the general case of this example, if we set
\[
L_s(A) = \max\{L(A), L(A^*)\}   ,
\]
then, much as earlier, $L_s$ will be a Leibniz $*$-seminorm.
But in fact, $L_s$ will be strongly Leibniz. This is because
\[
[P, A] = PAP^\perp \ - \ P^\perp AP,
\]
so that 
\[
\|[P,A]\| = \|PAP^\perp\| \vee  \|P^\perp AP\| \ = \ L_s(A).
\]
This is all closely related to the Arveson distance formula \cite{Arv},
as shown to me by Erik Christensen at the time when I developed theorem
3.2 of \cite{R24}.

\end{exam}

But the above examples depend on the
fact that $L$ is not a $*$-seminorm. It would be interesting to have
examples of Leibniz $*$-seminorms that are not strongly Leibniz. 
It would also be interesting
to have examples for which $\cA$ is finite-dimensional. (Note that right after proposition 1.2 of \cite{R21} there is an example of 
a Leibniz $*$-seminorm that is not
strongly Leibniz, but this example depends crucially on the Leibniz
seminorm taking value $+\infty$ on some elements.)


\def\dbar{\leavevmode\hbox to 0pt{\hskip.2ex \accent"16\hss}d}
\providecommand{\bysame}{\leavevmode\hbox to3em{\hrulefill}\thinspace}
\providecommand{\MR}{\relax\ifhmode\unskip\space\fi MR }
\providecommand{\MRhref}[2]{%
  \href{http://www.ams.org/mathscinet-getitem?mr=#1}{#2}
}
\providecommand{\href}[2]{#2}

\end{document}